\documentclass[reqno,a4paper,12pt]{amsart}
\usepackage{amsmath,amsthm,amsfonts,amssymb, mathrsfs}
\usepackage{verbatim, graphicx, ifthen, enumitem}
\usepackage[T1]{fontenc}
\usepackage [applemac] {inputenc}

\addtolength{\hoffset}{-1cm} \addtolength{\textwidth}{2cm}
\addtolength{\textheight}{-0.5cm}
\parskip .04in

\allowdisplaybreaks

\setlength{\marginparwidth}{1.1in}
\let\oldmarginpar\marginpar
\renewcommand\marginpar[1]{\-\oldmarginpar[\raggedleft\footnotesize #1]%
{\raggedright\footnotesize #1}}
%
%

\newtheorem*{theo}{Theorem}

\newtheorem{theorem}{Theorem}
\newtheorem{lemma}{Lemma}

\newtheorem{proposition}{Proposition}
\newtheorem{corollary}{Corollary}
\theoremstyle{definition}
\newtheorem{definition}{Definition}
\newtheorem{example}{Example}

\theoremstyle{remark}
\newtheorem{remark}{Remark}

\newcommand{\Z}{\mathbb{Z}}
\newcommand{\D}{\mathbb{D}}
\newcommand{\abs}[1]{|#1|}
\newcommand{\Abs}[1]{\left|#1\right|}
\newcommand{\Norm}[1]{\left\|#1\right\|}
\newcommand{\inner}[2]{\left\langle #1|#2 \right\rangle}
\newcommand{\seq}[1]{(#1)}

\newcommand{\set}[1]{\big\{#1\big\}}
\newcommand{\Set}[1]{\left\{#1\right\}}

\newcommand{\N}{\mathbb{N}}
\newcommand{\R}{\mathbb{R}}
\newcommand{\T}{\mathbb{T}}
\newcommand{\C}{\mathbb{C}}
\newcommand{\PW}{\mathrm{PW}}

\newcommand{\Q}{\mathbb{Q}}

\newcommand{\Hp}{\mathscr{H}}

\def\T{\mathbb{T}}
\def\N{\mathbb{N}}
\def\Z{\mathbb{Z}}
\def\Q{\mathbb{Q}}
\def\R{\mathbb{R}}
\def\C{\mathbb{C}}

\def\1{\mathbf{1}}

\newcommand{\dif}{\mathrm{d}}
\newcommand{\e}{\mathrm{e}}
\newcommand{\im}{\mathrm{i}}
\newcommand{\norm}[1]{\|#1\|}

\renewcommand{\Re}{\operatorname{Re}}
\renewcommand{\Im}{\operatorname{Im}}

\begin{document}

 \title[Local properties of Hilbert spaces of Dirichlet series]{Local properties of Hilbert spaces of \\ Dirichlet series}
  
\author{Jan-Fredrik Olsen}
\address{Centre for Mathematical Sciences, Lund University, P.O. Box 118, SE-221 00 Lund, Sweden}
\email{janfreol@maths.lth.se}

\begin{abstract}
We show that the asymptotic behavior of  the partial sums of a sequence of positive numbers determine   the local behavior of the   Hilbert space of Dirichlet series defined using these as weights.
This extends  results recently obtained describing the local behavior of Dirichlet series with  square summable coefficients in terms of local integrability, boundary behavior, Carleson measures and interpolating sequences.
As these spaces can be identified with functions spaces  on the infinite-dimensional polydisk, this gives new results on
 the Dirichlet and Bergman spaces on the infinite dimensional polydisk, as well as the scale of Besov-Sobolev spaces containing the Drury-Arveson space on the infinite dimensional unit ball. 
We use both techniques from the theory of sampling in   Paley-Wiener spaces, and classical results from analytic number theory.
\end{abstract}

 \maketitle

\section{introduction}
The theory of Dirichlet series, i.e. functions of the form $f(s) = \sum_{n \in \N} a_n n^{-s}$ with $s= \sigma + \im t$ as the complex variable,   offers a bridge between number theory and analysis. Perhaps the most appealing example of the power of this connection is given by the tauberian approach to the classical prime number theorem. One way to state the prime number theorem is to say that the Chebyshev-type inequalities
\begin{equation} \label{chebyshev-type inequalities}
	A \frac{x}{(\log x)^\alpha} \leq 
	 \sum_{n \leq x} w_n \leq B \frac{x}{(\log x)^\alpha},  
\end{equation}
with   coefficients
\begin{equation*}
	 w_n =  \left\{  \begin{array}{cc}1 & n \; \text{is  a prime}\\ 0 & \text{otherwise} \end{array} \right.,
\end{equation*}
and $\alpha    = 1$, holds for any $A,B > 1$ as long as    $x>0$  is taken to be sufficiently large.
Originally due to Ikehara, the general idea of the tauberian approach is  to connect the function theoretic properties of the Riemann zeta function $\zeta(s) = \sum_{n \in \N} n^{-s}$ to the growth of these partial sums (see e.g. \cite[p. 245]{tenenbaum1995}). As is well-known, the properties of the Riemann zeta function is closely related to the behavior of the  prime numbers through the Euler product formula
\begin{equation*}
	\zeta(s) = \prod_{p \; \text{prime}} \frac{1}{1 - p^{-s}}.
\end{equation*}

We study the connection between the asymptotic behavior in terms of the inequalities \eqref{chebyshev-type inequalities}   for general sequences $(w_n)_{n \in \N}$ of non-negative numbers, and   local function theoretic properties of  the Hilbert spaces  
\begin{equation*}
	 \Hp_w = \left\{ \sum_{n\in \N} a_n n^{-s} :   \sum \frac{\abs{a_n}^2}{w_n} < \infty \right\}.
\end{equation*}
(By convention, if $w_n = 0$, we exclude the basis vector $n^{-s}$ from this definition.) 

The recent interest in the theory of these types of spaces began with a paper by Hedenmalm, Lindqvist and Seip \cite{hls1997}, where in particular the local behavior of functions in the Dirichlet-Hardy space $\Hp^2$, which corresponds to the choice  $w_n \equiv 1$, is studied.  By the Cauchy-Schwarz inequality, the space $\Hp^2$ is seen to consist of functions analytic on the half-plane $\C_{1/2} = \{ \Re s > 1/2\}$. The results  of this and later contributions \cite{bayart2002paper, konyagin_queffelec2002, olsen_saksman2010, olsen_seip2008}  can   be summarised as saying that locally $\Hp^2$   looks much like the classical Hardy space 
\begin{equation*}
	 H^2(\C_{1/2})  =  \left\{ f \; \text{analytic on} \; \C_{1/2} :   \sup_{\sigma > 1/2} \frac{1}{2\pi} \int_\R \abs{f(\sigma + \im t)}^2 < \infty \right\}.
\end{equation*}

One of the starting points of the function theory for the Dirichlet-Hardy space is a simple, but striking, local connection indicated by comparing reproducing kernels, i.e. functions $k_w$ such that $\inner{f}{k_w} = f(w)$ for all $f$ in the space, and points $w$ in the domain of definition. For the space $\Hp^2$, the reproducing kernel at $w \in \C_{1/2}$ is the translate $k_w(s) := \zeta(s+ \bar{w})$ of the Riemann zeta function. The Riemann zeta function  is known to be a meromorphic function with a single pole of residue one at $s=1$. This yields the formula
\begin{equation*}
	 k_w(s) = \frac{1}{s + \bar{w}-1}  + h(s + \bar{w}),	 
\end{equation*}
where $h$ is an entire function. This reveals that $k_w$ is an analytic perturbation of the reproducing kernel for $H^2(\C_{1/2})$, namely the Szeg\H o-kernel $k_w^S(s) = (s + \bar{w}-1)$.

The   following results strengthens this local connection. The first is \cite[Theorem 4.11]{hls1997}, which was found independently by Montgomery \cite[p. 140]{montgomery1994} in the context of analytic number theory.
 	\begin{theo}[Local embedding theorem \cite{hls1997, montgomery1994}] Given a bounded interval $I$, there exists  $C>0$, depending only on the length of $I$, such that for all $F \in \Hp^2$ we have  $\sup_{\sigma > 1/2}\int_I \abs{F(\sigma + \im t)}^2 \dif t \leq C \norm{F}_{\Hp^2}^2$.
	\end{theo}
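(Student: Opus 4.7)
The plan is to view $F(\sigma + \im t)$ as a generalized trigonometric polynomial with frequencies $\{\log n\}_{n\in\N}$ and bound its local $L^2$-norm via a Plancherel-P\'olya type inequality adapted to this non-uniformly separated frequency set. Since the $\Hp^2$-norm $\norm{F}_{\Hp^2} = (\sum |a_n|^2)^{1/2}$ is invariant under vertical translation $F(s) \mapsto F(s + \im t_0)$ (the coefficients merely get multiplied by unimodular factors $n^{-\im t_0}$), it suffices to prove the inequality for $I = [0,T]$, where $T = |I|$.

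Writing $b_n := a_n n^{-\sigma}$, the function $t \mapsto F(\sigma + \im t) = \sum_n b_n \e^{-\im t \log n}$ is essentially almost-periodic, and expanding the square gives
\begin{equation*}
    \int_0^T \abs{F(\sigma + \im t)}^2 \dif t = T \sum_{n\in\N} \abs{b_n}^2 + \sum_{m \neq n} b_m \overline{b_n} \cdot \frac{\e^{-\im T(\log m - \log n)} - 1}{-\im (\log m - \log n)}.
\end{equation*}
The diagonal term is immediately bounded by $T \sum |a_n|^2 n^{-2\sigma} \leq T \norm{F}_{\Hp^2}^2$ for $\sigma > 1/2$. The task reduces to controlling the off-diagonal sum.

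To handle the off-diagonal contribution I would invoke the Montgomery-Vaughan form of Hilbert's inequality: for any square-summable complex sequence $(c_n)$ indexed by real numbers $\lambda_n$ with $\delta_n := \min_{m \neq n} \abs{\lambda_m - \lambda_n}$,
\begin{equation*}
    \Abs{\sum_{m \neq n} \frac{c_m \overline{c_n}}{\lambda_m - \lambda_n}} \leq C \sum_n \frac{\abs{c_n}^2}{\delta_n}.
\end{equation*}
Applied with $\lambda_n = \log n$ and with the uniformly bounded factor $(\e^{-\im T x} - 1)/x$ absorbed (after a small routine argument, e.g.\ by writing the Fej\'er-like kernel as a superposition), this yields a bound by $C \sum \abs{b_n}^2 / \delta_n$. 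The crucial arithmetic input is that the frequencies $\log n$ have separation $\delta_n = \log(n+1) - \log n \geq c/n$ for $n \geq 1$, hence $1/\delta_n \leq Cn$. Therefore
\begin{equation*}
    \int_0^T \abs{F(\sigma + \im t)}^2 \dif t \leq T \sum_n \abs{a_n}^2 n^{-2\sigma} + C \sum_n \abs{a_n}^2 \, n \cdot n^{-2\sigma} \leq (T + C)\, \norm{F}_{\Hp^2}^2,
\end{equation*}
using $\sigma > 1/2$ so that $n^{1-2\sigma} \leq 1$ in the second sum. This gives the desired constant depending only on $|I|$.

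The main obstacle is the Montgomery-Vaughan inequality itself, which is nontrivial precisely because the separations $\delta_n \to 0$: a naive Ingham-type inequality requiring a uniform lower bound on $\delta_n$ is inadequate here. One can either quote it directly from Montgomery's monograph, or reprove it by duality, reducing to a weighted estimate on the Hilbert matrix $(\lambda_m - \lambda_n)^{-1}$ in which the weights $\delta_n^{-1/2}$ exactly compensate for the clustering of $\log n$ at infinity. Once this analytic number theoretic ingredient is in hand, the remainder of the argument is the clean expansion and bookkeeping described above.
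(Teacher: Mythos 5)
Your proof is correct, but it takes a genuinely different route from the paper's. The paper argues by duality: it writes $\bigl(\int_I|F(\sigma+\im t)|^2\,\dif t\bigr)^{1/2}$ as a supremum of $\int_I F(\sigma+\im t)g(\im t)\,\dif t$ over unit-norm $g\in L^2(I)$, applies Cauchy--Schwarz to reduce the problem to bounding $\sum_n|\hat g(\log n)|^2 w_n/n$, and then controls that sum block by block over $n\in(\e^k,\e^{k+1})$ using the regularity of $\hat g$ (a Fourier transform of a compactly supported function) together with the partial-sum bound $\sum_{n\le x}w_n\lesssim x$. You instead expand the square and invoke the Montgomery--Vaughan generalization of Hilbert's inequality; this is essentially Montgomery's own mean-value-theorem proof (the theorem is credited to him alongside Hedenmalm--Lindqvist--Seip), and the steps you sketch are sound: the kernel $(\e^{-\im Tx}-1)/(-\im x)$ splits into the two Hilbert-type kernels $\e^{-\im Tx}/(-\im x)$ and $1/(\im x)$, each handled by Montgomery--Vaughan after twisting the coefficients by unimodular factors, and the separation estimate $\delta_n\gtrsim 1/n$ for $\lambda_n=\log n$ gives exactly the factor $n$ that $n^{-2\sigma}$ absorbs when $\sigma>1/2$. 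The trade-off is this: the paper's duality argument uses only the Chebyshev-type bound on the partial sums of the weights, so it transfers verbatim to the general weighted spaces $\Hp_w$ that are the paper's actual subject, whereas your final step relies on the pointwise inequality $n\cdot n^{-2\sigma}\le 1$, i.e.\ on information about individual weights rather than their partial sums, and so does not generalize as directly. What your route buys in return is a sharp explicit constant of the form $|I|+O(1)$ and independence from the sampling-theoretic machinery, at the price of importing the (nontrivial, but quotable) Montgomery--Vaughan inequality.
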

	It is an immediate consequence of this theorem that if $F \in \Hp^2$ then $F(s)/s \in H^2(\C_{1/2})$. In particular, this implies that functions in $\Hp^2$ have non-tangential boundary values almost everywhere on the abscissa $\sigma = 1/2$. The second theorem we mention is in some sense dual to the previous one, and describes the space spanned by the boundary functions.
	\begin{theo}[Local boundary function property \cite{olsen_saksman2010}] Given a bounded interval $I$ and a     function $f \in H^2(\C_{1/2})$, there exists $F \in \Hp^2$ such that  
	$F-f$ has an analytic continuation across the segment $1/2 + \im I$.
	\end{theo}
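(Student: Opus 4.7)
The plan is to combine duality for the Local embedding theorem with the Szeg\H o--Cauchy representation in $H^2(\C_{1/2})$ and the meromorphic structure of $\zeta$. I would fix a bounded open interval $I'$ with $\overline{I} \subset I'$, together with a smooth cut-off $\chi$ supported in $I'$ with $\chi \equiv 1$ on a neighbourhood of $\overline{I}$. By the Local embedding theorem, the boundary restriction $R\colon \Hp^2 \to L^2(I', \dif t)$, $(RF)(t) := F^*(1/2 + \im t)$, is bounded, so its Hilbert-space adjoint $R^*\colon L^2(I') \to \Hp^2$ is bounded as well. Testing against the reproducing kernel $k_w(s) = \zeta(s + \bar{w})$ and using that $\zeta$ has real Dirichlet coefficients (so $\overline{\zeta(\bar z)} = \zeta(z)$) gives the formula
\[
(R^* g)(w) = \int_{I'} g(t)\, \zeta\!\bigl(w + \tfrac{1}{2} - \im t\bigr)\, \dif t, \qquad w \in \C_{1/2}.
\]

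The next step is to isolate the singular part via $\zeta(z) = 1/(z-1) + h(z)$ with $h$ entire, which rewrites the above as
\[
(R^* g)(w) = \int_{I'} \frac{g(t)}{w - 1/2 - \im t}\, \dif t + \int_{I'} g(t)\, h\!\bigl(w + \tfrac{1}{2} - \im t\bigr)\, \dif t,
\]
the second summand being entire in $w$. In parallel, I would express $f \in H^2(\C_{1/2})$ through its Szeg\H o--Cauchy representation against the $L^2$ boundary values $f^*$:
\[
f(w) = \frac{1}{2\pi}\int_\R \frac{f^*(1/2 + \im t)}{w - 1/2 - \im t}\, \dif t = f_1(w) + f_2(w),
\]
where $f_1$ is the contribution of $\chi(t)$ and $f_2$ that of $1 - \chi(t)$. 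Since $1 - \chi$ vanishes on a neighbourhood of $\overline{I}$, the kernel $(w - 1/2 - \im t)^{-1}$ has no singularity for $w$ in a small neighbourhood of $1/2 + \im I$ and $t$ in the support of $1 - \chi$; hence $f_2$ already extends analytically across $1/2 + \im I$.

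Finally I would set $g(t) := \chi(t)\, f^*(1/2 + \im t)/(2\pi) \in L^2(I')$ and $F := R^* g \in \Hp^2$. By the displays above, $F$ and $f_1$ agree modulo an entire function, so
\[
F - f = (F - f_1) - f_2
\]
is a sum of an entire function and one analytic across $1/2 + \im I$, and hence extends analytically across $1/2 + \im I$, as required. The main technical input is the boundedness of $R^*$, which is the Local embedding theorem by duality; beyond this, the argument reduces to a Cauchy integral manipulation exploiting that $\zeta$ has a simple pole at $s = 1$.
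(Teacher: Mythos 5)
Your argument is correct, and it reaches the conclusion by a genuinely different route from the machinery this paper builds. You construct $F$ explicitly as $R^*g$ with $g=\chi f^*/(2\pi)$, and the whole proof rests on two facts special to $\Hp^2$: the reproducing kernel is a translate of $\zeta$, and $\zeta(z)=1/(z-1)+h(z)$ with $h$ entire, so that the singular part of $(R^*g)(w)=\int_{I'}g(t)\,\zeta(w+\tfrac12-\im t)\,\dif t$ is exactly the truncated Szeg\H o--Cauchy integral $f_1$, while the remainder is entire; the computation of the adjoint, the splitting $f=f_1+f_2$, and the analyticity of $f_2$ near $1/2+\im I$ all check out (including the norm bound $\norm{F}_{\Hp^2}\lesssim\norm{f}_{H^2}$, which you get for free). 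The paper, by contrast, treats this statement as a special case of Theorem 2/Theorem 4 for general weights $(w_n)$: there one has no meromorphic continuation of $\sum w_n n^{-s}$ to exploit, so existence of $F$ is obtained by proving that the operator $R_I$ is \emph{onto} $W^{\alpha/2}(I)$, which is translated (Lemma 3) into the statement that the discrete measure $\nu=\sum(\delta_{\log n}+\delta_{-\log n})w_n/n$ is a sampling measure for $\mathcal{F}W_0^{-\alpha/2}(I)$, and then settled with the Beurling--Jaffard--Seip density theorem, the Ortega-Cerd\`a--Seip characterization of sampling measures, and the approximate-surjectivity Lemma 5. Your approach buys a short, explicit, constructive proof valid for every bounded interval; the paper's approach buys generality (arbitrary weights satisfying Chebyshev-type bounds, where only the asymptotics of $\sum_{n\le x}w_n$ are available) at the cost of possibly having to shrink the interval in Theorem 2. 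Your construction is in fact close in spirit to the original Olsen--Saksman proof that the paper cites --- note the remark in Section 5 that ``the idea is to express the difference $f-F$ by using a Szeg\H o-type integral'' --- but it would not survive the passage to general $w_n$, which is precisely why the paper replaces it with sampling theory.
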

	With these results in hand, it is not difficult to show that  a compactly supported   positive measure $\mu$ on $\C_{1/2}$ is a Carleson measure for $\Hp^2$ if and only if it is a Carleson measure for $H^2(\C_{1/2})$ (see also the proof of Theorem \ref{interpolation theorem}). Recall that if   $H$ is a Hilbert space of functions on $\C_{1/2}$, we say that a positive Borel measure is Carleson for $H$ if there exists $C>0$ such that for all $f \in H$ we have
	\begin{equation*}
		\int_{\C} \abs{f(s)}^2 \dif \mu(s) \leq C \norm{f}^2_{H}.
	\end{equation*}

	Finally, we mention the following result on interpolating sequences.
	Recall that a sequence $(s_j)_{j \in \N}$ is called interpolating    for a Hilbert space $H$ of functions analytic on some domain $\Omega$, which admits a reproducing kernel $k_w$ at all $w \in \Omega$, if for all  sequences $(w_j)_{j \in \N}$ satisfying $\sum \abs{w_j}^2/\norm{k_{w_j}}^2 < \infty$ there exists a solution $f \in H$ to the problem $f(s_j) = w_j$. 
	\begin{theo}[Local interpolation theorem \cite{olsen_seip2008}] Let $S = (s_j)$ be a bounded sequence of distinct points in $\C_{1/2}$. Then $S$ is interpolating for $\Hp^2$ if and only if it is interpolating for $H^2(\C_{1/2})$.
 	\end{theo}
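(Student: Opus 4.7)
My plan is to use the standard Hilbert-space characterization: a sequence $(s_j)$ is interpolating for a reproducing kernel Hilbert space $H$ if and only if the normalized kernels $\hat k^H_{s_j} = k^H_{s_j}/\|k^H_{s_j}\|$ form a Riesz sequence, equivalently, the Gram matrix $G^H_{jk} = \langle \hat k^H_{s_j}, \hat k^H_{s_k}\rangle$ defines a bounded and invertible operator on $\ell^2$. Boundedness encodes the Carleson condition for the atomic measure $\mu_S^H = \sum_j \|k^H_{s_j}\|^{-2}\delta_{s_j}$; invertibility is the additional ``Riesz basis'' content of interpolation. The goal is then to transfer both conditions between $\Hp^2$ and $H^2(\C_{1/2})$.

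The comparison is driven by the Laurent expansion $\zeta(z) = 1/(z-1) + h(z)$ with $h$ entire, which gives the pointwise kernel identity $k^{\Hp^2}_w(s) = k^{H^2}_w(s) + h(s+\bar w)$ and the norm identity $\|k^{\Hp^2}_w\|^2 = \|k^{H^2}_w\|^2 + h(2\Re w)$. Since the sequence lies in a bounded subset of $\C_{1/2}$, the values of $h$ appearing are uniformly bounded, whereas $\|k^{H^2}_w\|^{-2} = 2\Re w - 1$ is bounded above; thus $\|k^{\Hp^2}_{s_j}\| \asymp \|k^{H^2}_{s_j}\|$ uniformly in $j$. The measures $\mu_S^{\Hp^2}$ and $\mu_S^{H^2}$ are therefore comparable, and the Carleson equivalence for compactly supported measures---already noted in the excerpt, using the Local Embedding Theorem and the Local Boundary Function Property---yields equivalence of boundedness of $G^{\Hp^2}$ and $G^{H^2}$.

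For the invertibility step, set $A_j = \|k^{\Hp^2}_{s_j}\|$, $B_j = \|k^{H^2}_{s_j}\|$, and $\mathcal A = \operatorname{diag}(A_j/B_j)$, which is bounded above and below by the previous step. Translating the pointwise kernel identity to Gram matrices gives
\[
\mathcal A\, G^{\Hp^2}\, \mathcal A = G^{H^2} + E, \qquad E_{jk} = h(s_k + \bar{s_j})\sqrt{(2\Re s_j - 1)(2\Re s_k - 1)},
\]
reducing the task to showing that $G^{H^2}$ is bounded below on $\ell^2$ if and only if $G^{H^2} + E$ is. Applying the Carleson condition to a fixed Szeg\H o kernel $k^{H^2}_{w_0}$ (which has $|k^{H^2}_{w_0}(s_j)|$ bounded above and below on the bounded region) forces $\sum_j (2\Re s_j - 1) < \infty$, and together with the uniform bound on $h$ this makes $E$ Hilbert--Schmidt, hence compact.

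The main obstacle is that compactness of $E$ does not automatically promote to equivalence of invertibility between two positive Gram matrices. I plan to close this gap by arguing constructively via iteration in the spirit of the paper: starting from an interpolating operator in one space and applying the Local Boundary Function Property, one obtains an interpolant in the other space that matches the prescribed data up to a correction coming from the analytic difference $h(s_k + \bar{s_j})$; this correction is itself an interpolable sequence (with smaller weighted $\ell^2$-norm, by boundedness of $h$ and summability of $(2\Re s_j - 1)$), and can be absorbed at the next step. The convergence of this Neumann-type iteration, which is ultimately equivalent to an operator-norm estimate on the symbol-induced perturbation $E$ built from the entire function $h$, is the technical heart of the argument.
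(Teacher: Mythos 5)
Your reduction is sound up to the point you yourself flag: the comparability of the kernel norms on bounded sets, the transfer of the Carleson (upper Gram) bound, and the identity $\mathcal A\, G^{\Hp^2}\mathcal A = G^{H^2}+E$ with $E$ Hilbert--Schmidt are all correct, and the norm comparison is exactly what the paper's Lemma \ref{reproducing lemma} provides. The genuine gap is the last step. A compact perturbation of a positive operator bounded below need not be bounded below (subtract a rank-one projection from the identity), so invertibility does not transfer; and the Neumann-type iteration you propose in its place has no smallness to drive it. Each pass through the Local Boundary Function Property produces a correction $\bigl(\phi(s_j)/\norm{k_{s_j}^{\Hp^2}}\bigr)_j$ whose $\ell^2$-norm is controlled by $C\bigl(\sum_j(2\Re s_j-1)\bigr)^{1/2}\norm{f}$, and this constant is a fixed finite number with no reason to be less than $1$. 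As written the iteration need not converge, and that is precisely the ``technical heart'' you defer rather than prove.

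The paper closes this gap with two ideas missing from your proposal. First, it truncates: it considers the tail operators $\mathcal T_N F=\bigl(F(s_n)/\norm{k_{s_n}^{\Hp^2}}\bigr)_{n\ge N}$ and uses that $\sum_{n\ge N}(2\Re s_n-1)\to0$ as $N\to\infty$ (equivalently, that your $E$ is Hilbert--Schmidt, so its tail sections tend to $0$ in operator norm) to make the correction term have $\ell^2$-norm at most $1/2$; the quantitative surjectivity criterion of Lemma \ref{approximate surjectivity} then yields that $\mathcal T_N$ is onto for large $N$. Second, it must recover the full sequence from the tail: surjectivity of $\mathcal T_N$ does not formally give surjectivity of $\mathcal T_1$, and the paper supplies this by Lagrange-type sums built from the finite products $\prod_{j=1}^{N}\bigl(1-p_j^{-(s-s_j)}\bigr)$, which handle the finitely many omitted points inside $\Hp^2$. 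With both steps added (the first being essentially your Hilbert--Schmidt observation pushed one step further), your Gram-matrix framing would become a correct, mildly repackaged version of the paper's argument; without them, the proof is incomplete at its decisive point.
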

	(See   \cite{gordon_hedenmalm1999, mccarthy2004, olofsson2010, saksman_seip2009} for further results on   functions spaces of Dirichlet series.)

	Such precise results are perhaps surprising in view of a deep feature of the theory, which dates back to H.~Bohr \cite{bohr1913lift}. He observed that Dirichlet series can be identified in a natural way with power series of countably infinitely many variables by identifying the $i$'th complex variable $z_i$ with the Dirichlet monomial $p_i^{-s}$,  where $p_i$ is the $i$'th prime number.
Therefore the study of the spaces $\Hp_w$ can be seen as the study of Hilbert spaces of functions in countably infinitely many variables. Namely,
the space $\Hp_w$,  introduced above, is identified with
\begin{equation*}
	 H_w = \left\{ \sum_\nu a_\nu z^\nu : \sum_\nu \abs{a_\nu}^2/w_\nu < \infty \right\}.
\end{equation*}
Here $\nu = (\nu_1, \nu_2, \ldots)$ is a multi-index, $z^\nu = z_1^{\nu_1} z_2^{\nu_2} \cdots$, and we only sum over $\nu$ with 
finite non-zero entries in $\N$.
In particular, the Dirichlet-Hardy space $\Hp^2$ is identified with the Hardy space on the infinite dimensional polydisk, 
$H^2(\D^\infty)$, which corresponds to the choice $w_\nu \equiv 1$. 

For completeness, we briefly discuss  the space  $H^2(\D^\infty)$, or rather, its more  natural counterpart   $H^2(\T^\infty)$, where
\begin{equation*}
	\T^\infty  = \Big\{(z_1, z_2, \ldots ) : z_i \in \T \Big\}
\end{equation*}
is the countably infinite dimensional torus.
$\T^\infty$ is   more natural to work on than $\D^\infty$, since  it is a compact abelian group under coordinate-wise multiplication, and therefore admits a unique normalized Haar measure $\mu$. It follows that we may define the space $L^2(\T^\infty)$ in the usual way.
To define the analytic subspace $H^2(\T^\infty)$, we identify each 
$\chi \in \T^\infty$ with a multiplicative function determined uniquely by $\chi(p_j) = z_j$, where $p_j$ is the $j$'th prime number. The  function $\chi$ is extended to all the positive rational numbers $\Q_+$ by setting $\chi(1/n) = \overline{\chi(n)}$. 
The Fourier spectrum of $f \in L^2(\T^\infty)$ is in this way identified with $\Q_+$. In light of this, we define $H^2(\T^\infty)$ to be the closed subspace whose Fourier spectrum is supported  only on $\N$. Similarly,  for any $p>0$ we   obtain the spaces $H^p(\T^\infty)$. The   Bohr identification now yields a family of spaces Dirichlet-Hardy spaces that we denote by $\Hp^p$. We refer the reader to \cite{bayart2002paper, cole_gamelin1986, hls1997} for further details. In particular, in \cite{cole_gamelin1986}, it is explained how functions in $H^p(\T^\infty)$ can be identified with analytic functions on $\D^\infty \cap \ell^2$, thereby justifying the use the notation $H^p(\D^\infty)$. As a consequence, one direction of the Bohr correspondence can be understood as evaluating a function in $H^p(\T^\infty)$ at the points $(2^{-s}, 3^{-s}, 5^{-s}, \ldots)$ for $\Re s > 1/2$.

Analogues of the three  theorems mentioned above have also been obtained in for the choice of    weights $w_n = (\log n)^\alpha$ \cite{olsen_saksman2010, olsen_seip2008}. However, observe that these spaces, which were introduced by McCarthy  in \cite{mccarthy2004},     correspond to   spaces of functions in infinite variables where monomials of the same degree may differ in norm. 
Our approach in this paper allows us to consider more general choices of weights $w_n$, which correspond to more well-known spaces of infinite variables.
In fact, we are able determine the local behavior of spaces of Dirichlet series corresponding to important   classical spaces. These include the Dirichlet and Bergman spaces on the infinite dimensional polydisk, and the Drury-Arveson space, as well as the general scale of analytic Besov-Sobolev spaces, on the infinite dimensional unit ball. (See examples 1 through \ref{last example} below.)

%
%
%
%
%
%

The structure of the paper is as follows. Our results are presented as theorems 1 to 4 in the following section, where we also include a detailed treatment of the examples mentioned above, as well as a few additional ones. In Section 3 we recall some background results on sampling theory needed in the proofs, and establish  a simple lemma. The proofs of our theorems are   given in sections 4 to 7. In Section 8, we make some concluding remarks.

\section{Results}
We begin with some notation. Recall that we denote the complex variable by $s = \sigma + \im t$, and $\C_{\sigma_0} = \{ \sigma > \sigma_0\}$. In addition, for a bounded interval $I \subset \R$,   we set $\C_I = \{ s \in \C : \im(s-1/2) \notin \R \backslash I \}$. That is, $\C_I$ is the complex plane with two rays on the abscissa $\sigma = 1/2$ removed. Also, we take $f(x) \sim g(x)$ to mean that $g(x)/f(x) \rightarrow 1$ for $x$ approaching some given limit, and by $f(x) \simeq g(x)$ we mean that there exists constants $A,B>0$ such that $Af(x) \leq g(x) \leq Bf(x)$ for all $x$. If only one of the inequalities hold, we use the symbols $\lesssim$ and $\gtrsim$. We denote the Lebesgue measure in the plane by $\dif m$.

Next, we review the definition of the classical scale of spaces $D_\alpha(\C_{1/2})$, which contains the Bergman ($\alpha=-1$), Hardy ($\alpha=0$) and Dirichlet ($\alpha=1$) spaces on the half-plane $\C_{1/2}$.  Accordingly, we set $D_0(\C_{1/2}) := H^2(\C_{1/2})$. For $\alpha <0$, the space $D_\alpha(\C_{1/2})$ consists of the functions $f$ analytic on the half-plane $\C_{1/2}$ and finite in the norm
\begin{equation*}
	 \norm{f}^2_{D_\alpha(\C_{1/2})}  =  \int_{\C_{1/2}} \abs{f(s)}^2 \left( \sigma - \frac{1}{2} \right)^{-\alpha-1} \dif m(s).
\end{equation*}
For $0<\alpha \leq 1$, the space consists of functions analytic on $\C_{1/2}$ for which $f(\sigma) \rightarrow 0$ as $\sigma \rightarrow \infty$, and which are finite in the norm
\begin{equation*}
	 \norm{f}^2_{D_\alpha(\C_{1/2})}  =  \int_{\C_{1/2}} \abs{f'(s)}^2 \left( \sigma - \frac{1}{2} \right)^{-\alpha+1} \dif m(s).
\end{equation*}
The spaces $D_\alpha(\C_{1/2})$ are reproducing kernel spaces. I.e., given $\alpha\leq 1$ and $\xi \in \C_{1/2}$, there exists a function $k_\xi(s)$, such that $\inner{f}{k_\xi} = f(\xi)$. For $\alpha < 1$ these reproducing kernels are now given by
\begin{equation*}
	 k^\alpha_\xi(s) = c_\alpha (s + \bar{\xi} - 1)^{\alpha-1},
\end{equation*}
for the constants $c_\alpha = (-\alpha)2^{-\alpha-1}$ when $\alpha <0$ and $c_\alpha= 2^{\alpha-1}(1-\alpha)^{-1}$ for $0<\alpha<1$. In the limiting case $\alpha=1$, we have
\begin{equation*}
	 k^\alpha_\xi(s) = \frac{1}{\pi} \log \frac{1}{s + \bar{\xi} -1}.
\end{equation*}

To simplify the statements of our theorems, we define the following notions of local embeddings. Here we use the notation $\Omega_I =  (1/2,1] \times I$, where $I \subset \R$ is a bounded interval.
\begin{definition}
	Fix $\alpha \leq 1$. We say the space $\Hp_w$ is locally embedded in the space $D_\alpha(\C_{1/2})$  if
	for every finite interval $I$ there exists a constant $C>0$ such that, if $\alpha <0$ then
	\begin{equation*}
		\int_{\Omega_I}  \abs{F(s)}^2 \left( \sigma - \frac{1}{2} \right)^{-\alpha-1} \dif m(s) \leq C \norm{F}_{\Hp_w}^2,
	\end{equation*}
	if $\alpha = 0$, then
	\begin{equation*}
		\sup_{\sigma > 1/2} \int_{I}  \abs{F(\sigma + \im t)}^2 \dif t \leq C \norm{F}_{\Hp_w}^2,
	\end{equation*}
	and if $0 < \alpha \leq 1$, then
	\begin{equation*}
		\int_{\Omega_I}  \abs{F'(s)}^2 \left( \sigma - \frac{1}{2} \right)^{-\alpha+1} \dif m(s) \leq C \norm{F}_{\Hp_w}^2.
	\end{equation*}
\end{definition}

We can now formulate our first theorem. It generalizes the local embedding theorem mentioned in the introduction.
\begin{theorem} \label{upper theorem}
	Let $(w_n)_{n \in \N}$ be a sequence of non-negative numbers, and $\alpha \in (-\infty,1]$. The following statements are equivalent.
	\begin{itemize}
		\item[(a)] There exists a constant $C>0$ such that for all $x \geq 2$,
		\begin{equation*}
	 		\sum_{n \leq x} w_n \leq C \frac{x}{(\log x)^\alpha}.
		\end{equation*}
		\item[(b)] $\Hp_w$ is embedded locally into the space $D_\alpha(\C_{1/2})$.
	\end{itemize}
\end{theorem}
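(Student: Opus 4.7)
My plan is to prove both directions by exploiting the reproducing kernel $K^w_\xi(s) = \sum_n w_n n^{-s-\bar\xi}$ of $\Hp_w$ together with direct estimates of the $D_\alpha$-norm, with the logic of (a)$\Rightarrow$(b) going through a diagonal/off-diagonal split of a double Dirichlet sum, and (b)$\Rightarrow$(a) going through a Tauberian argument applied to the kernel.

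For (a)$\Rightarrow$(b), I would first extract two consequences from the Chebyshev bound $W(x):=\sum_{n\leq x}w_n\leq C x/(\log x)^\alpha$. Since $w_n\geq 0$, it gives the pointwise estimate $w_n\leq W(n)\leq Cn/(\log n)^\alpha$. Secondly, Abel summation in $D(\sigma):=\sum_n w_n n^{-\sigma}=\sigma\int_1^\infty W(x) x^{-\sigma-1}\dif x$, with the substitution $u=(\sigma-1)\log x$, converts (a) into $D(\sigma)\lesssim(\sigma-1)^{\alpha-1}$ (with a logarithmic modification if $\alpha=1$) as $\sigma\to 1^+$. Next, for a Dirichlet polynomial $F=\sum a_n n^{-s}$, I would expand $\abs{F(\sigma+\im t)}^2$ as a double sum, integrate in $t$ across $I$ to produce a Dirichlet-kernel factor $\min(\abs{I},1/\abs{\log(n/m)})$, and integrate in $\sigma$ against the weight $(\sigma-1/2)^{-\alpha-1}$ (or, in the range $0<\alpha\leq 1$, apply the same argument to $F'$ with weight $(\sigma-1/2)^{1-\alpha}$). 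A gamma-function calculation then gives a factor $\simeq(\log(nm))^\alpha$. The diagonal contribution collapses to $\sum \abs{a_n}^2(\log n)^\alpha/n$, which by the pointwise bound above is at most $C\sum \abs{a_n}^2/w_n = C\norm{F}_{\Hp_w}^2$. The off-diagonal I plan to dispatch via a Hilbert-type inequality of the Montgomery-Vaughan kind, exploiting the separation $\delta_n=\log((n+1)/n)\simeq 1/n$ of the log-frequencies; this is where the sampling ingredient from the introduction enters.

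For (b)$\Rightarrow$(a), the plan is to test the local embedding against the reproducing kernel $K^w_\xi$ itself. Its $\Hp_w$-norm squared equals $K^w_\xi(\xi)=D(2\Re\xi)$. Combined with the pointwise evaluation estimate supplied by the $D_\alpha$-kernel $k^\alpha_\xi$ (whose norm squared is $\simeq (2\Re\xi-1)^{\alpha-1}$), this yields $D(\sigma)\lesssim(\sigma-1)^{\alpha-1}$ as $\sigma\to 1^+$. A one-sided Tauberian theorem of Hardy-Littlewood-Karamata type, applicable thanks to the positivity of the $w_n$, then converts this back to the desired Chebyshev bound on $W(x)$.

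The principal obstacle I anticipate is the off-diagonal estimate in (a)$\Rightarrow$(b). While the diagonal uses only the pointwise bound $w_n\leq W(n)$ in a direct way, the off-diagonal terms must be dominated by the same quantity through the Hilbert-type inequality, and the balance between the oscillation in $t$, the decay in $\sigma$, and the $(\log nm)^\alpha$ factor becomes delicate in the extreme case $\alpha=1$, where the gamma-type integral in $\sigma$ degenerates into a logarithm and an additional care (consistent with the logarithmic form of $k^\alpha_\xi$ at $\alpha=1$) is required.
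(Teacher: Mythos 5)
Both directions of your proposal run aground on the same underlying issue: hypothesis $(a)$ controls only the partial sums $W(x)=\sum_{n\le x}w_n$, whereas each of your two key reductions secretly requires pointwise information about the individual $w_n$, which the theorem's generality (irregular weights such as $\Lambda(n)$ or the prime indicator) does not supply.

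In $(a)\Rightarrow(b)$, the diagonal term is fine, but the off-diagonal bound via the Montgomery--Vaughan Hilbert inequality yields $\sum_n\abs{a_n}^2n^{-2\sigma}/\delta_n$ with $\delta_n\simeq 1/n$, i.e.\ $\sum_n\abs{a_n}^2n^{1-2\sigma}$, which after the $\sigma$-integration is $\simeq\sum_n\abs{a_n}^2(\log n)^{\alpha}$. Dominating this by $\norm{F}_{\Hp_w}^2=\sum\abs{a_n}^2/w_n$ requires the pointwise bound $w_n\lesssim(\log n)^{-\alpha}$, which does not follow from $(a)$: the Chebyshev bound permits $w_n$ as large as $\simeq n(\log n)^{-\alpha}$ at sparse indices, and it already fails for the paper's examples $w_n=\Lambda(n)$ ($\alpha=0$) and $w_n=\1_{\{n\ \mathrm{prime}\}}$ ($\alpha=1$). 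The paper sidesteps this by dualizing: the embedding reduces to $\sum_n\abs{\hat g(\log n)}^2(\log n)^\alpha w_n/n\lesssim\norm{g}^2_{L^2(I)}$ for $g$ supported in $I$, and the regularity of $\hat g$ on unit intervals ($\abs{\hat g(\xi)}\le\abs{\hat g(k)}+\norm{\hat g'}_{L^2(k,k+1)}$ for $\xi\in(k,k+1)$) means only the block sums $\sum_{n\le e^{k+1}}w_n$ --- exactly what $(a)$ controls --- ever enter.

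In $(b)\Rightarrow(a)$, testing against the reproducing kernel extracts only the Abelian upper bound $D(\sigma):=\sum w_nn^{-\sigma}\lesssim(\sigma-1)^{\alpha-1}$, and no Tauberian theorem can convert this back into $W(x)\lesssim x(\log x)^{-\alpha}$: Karamata/Hardy--Littlewood needs an asymptotic equivalence, and the implication from the one-sided bound alone is simply false. For $\alpha=0$, put $w_{x_k}=kx_k$ at $x_k=\lfloor e^{e^k}\rfloor$ and $w_n=0$ otherwise; then $D(1+\eps)=\sum_k kx_k^{-\eps}\lesssim(\log\tfrac1\eps)^2=o(1/\eps)$, yet $W(x_k)\ge kx_k$, so the Chebyshev bound fails. (The cheap estimate $W(x)\le x^\sigma D(\sigma)$ at $\sigma=1+1/\log x$ loses one power of $\log x$, and this loss is genuine.) The paper instead tests the local integral inequality against the block polynomials $g_k(s)=\sum_{n\in(e^k,e^{k+1})}w_nn^{-s}$, whose $\Hp_w$-norm squared equals the block sum, over a $t$-interval so short that all cross terms in $\abs{g_k}^2$ are positive; this gives $\bigl(\sum_{n\in(e^k,e^{k+1})}w_n\bigr)^2\lesssim e^kk^{-\alpha}\sum_{n\in(e^k,e^{k+1})}w_n$, and Lemma \ref{partial sum lemma} upgrades the resulting blockwise bound to $(a)$. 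You would need to replace both your off-diagonal estimate and your Tauberian step by arguments of this block-averaged type.
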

By analogy to the prime number theorem, the inequality in $(a)$ can be considered as an upper Chebyshev-type inequality.

Although we defer most proofs to later sections, we now give the simplest possible illustration of how   Chebyshev-type inequalities are connected to the
function theoretic properties of the spaces $\Hp_w$. The argument is very similar to the one   in \cite{hls1997}.
\begin{proof}[Proof of $(a) \Rightarrow (b)$ when $\alpha=0$]
For $F \in \Hp_w$ and $\sigma >1/2$, we
calculate by duality
	\begin{equation*} 
		\begin{split}
		 \left(\int_I \Abs{F\left( \sigma + \im t\right)}^2  \dif t \right)^{1/2}
		 &=
		 \sup_{\underset{\norm{g}=1}{g \in L^2}} \int_I F\left( \sigma + \im t\right) g(\im t) \dif t \\
		 &=
		 \sup_{\underset{\norm{g}=1}{g \in L^2}} \sum_{n=1}^N  a_n n^{-\sigma} \int_I g(\im t) n^{-\im t} \dif t \\
		 &=
		 \sqrt{2\pi}
		 \sup_{\underset{\norm{g}=1}{g \in L^2}}  \sum_{n=1}^N a_n \frac{\hat{g}(\log n)}{n^{\sigma}} .
		 \end{split}
	\end{equation*}
	If we multiply and divide by $\sqrt{w_n}$, apply the Cauchy-Schwarz inequality, and take the appropriate limits,
	this yields	 
	\begin{equation} \label{intro: first duality}
		 \left(\int_I \Abs{F\left( 1/2 + \im t\right)}^2  \dif t \right)^{1/2} \lesssim  \norm{F}_{\Hp_w} 
		 		 \sup_{\underset{\norm{g}=1}{g \in L^2(I)}} \underbrace{ \sum_{n\geq1}  \frac{\abs{\hat{g}(\log n)}^2}{n} w_n }_{(*)}.
	\end{equation}
	The functions $\hat{g}$ are   Fourier transforms of   functions  with compact support in a fixed interval in $\R$, which implies
	that they are very regular in the sense that for $\xi \in (k,k+1)$ we get the easy estimate $\abs{\hat{g}(\xi)} \leq \abs{\hat{g}(k)} + \norm{\hat{g}'}_{L^2(k,k+1)}$. This is sufficient to conclude, since by this estimate, the upper Chebyshev inequality for $(w_n)$, and basic properties of the Fourier transform, we obtain
	\begin{equation*}
		 (*) =   \sum_{k=1}^\infty \sum_{n \in (\e^k, \e^{k+1})}  \frac{\abs{\hat{g}(\log n)}^2}{n} w_n
		 \leq  \sum_{k=1}^\infty \frac{\abs{\hat{g}(k)}^2 + \norm{\hat{g}'}_{L^2(k,k+1)}^2}{\e^{k}}  \sum_{n \leq  \e^{k+1}}   w_n
		 \lesssim \norm{g}_{L^2(I)}^2.
	\end{equation*}
\end{proof}

The following result generalizes the theorem on boundary functions mention in the introduction (see also Theorem \ref{lin-type theorem} below), and 
demonstrates the function theoretic significance of lower Chebyshev-type inequalities. 
\begin{theorem} \label{lower theorem}   
	Let $(w_n)$ be a sequence of non-negative numbers and $\alpha \in (-\infty,1]$. If  the upper
	Chebyshev-type inequality of \eqref{chebyshev-type inequalities}    holds      for this choice of $\alpha$ and $(w_n)$, then the following statements are equivalent: 
	\begin{itemize}
		\item[(a)] There exists a constant  such that for all $x \geq 2$,
		\begin{equation*}
	 		\sum_{n \leq x} w_n \gtrsim \frac{x}{(\log x)^\alpha}.
		\end{equation*}
		\item[(b)] For intervals $I$ sufficiently small  and  every $f \in D_{\alpha}(\C_{1/2})$  there exists
		  $F \in \Hp_w$ such that $f-F$ has an analytic continuation across the segment $1/2 + \im I$.  Moreover,
		for every domain $\Gamma$ at a positive distance from $\C \backslash \C_I$, there exists a constant $C$ such that 
		$\norm{f-F}_{L^\infty(\Gamma)} \leq C \norm{f}_{D_\alpha}$.
	\end{itemize}
\end{theorem}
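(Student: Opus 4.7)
The plan is to prove the two implications separately. The direction (b) $\Rightarrow$ (a) uses a test-function argument against the reproducing kernel of $D_\alpha(\C_{1/2})$ combined with a Tauberian theorem, while (a) $\Rightarrow$ (b) is a quantitative sampling-theoretic construction of $F$ from $f$.

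For (b) $\Rightarrow$ (a), I would fix $t_0$ in the interior of $I$ and for small $\eps>0$ set $\xi_\eps = 1/2 + \eps + \im t_0$, then take the test function $f_\eps := k^\alpha_{\xi_\eps}$ whose $D_\alpha$-norm satisfies $\norm{f_\eps}_{D_\alpha}^2 = k^\alpha_{\xi_\eps}(\xi_\eps) \simeq \eps^{\alpha-1}$ (with a logarithmic factor in the limiting case $\alpha=1$). By (b), applied with a domain $\Gamma$ containing $\xi_\eps$ for all small $\eps$, there is $F_\eps \in \Hp_w$ with $\abs{F_\eps(\xi_\eps) - f_\eps(\xi_\eps)} \leq C \norm{f_\eps}_{D_\alpha}$, so the triangle inequality and $|f_\eps(\xi_\eps)|\simeq \eps^{\alpha-1} \gg \eps^{(\alpha-1)/2}$ give $\abs{F_\eps(\xi_\eps)} \gtrsim \eps^{\alpha-1}$. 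A closed-graph argument applied to the linear map from $D_\alpha$ into the quotient of $\Hp_w$ by the subspace of elements that continue analytically across $1/2 + \im I$ yields, after selecting the minimum-norm coset representative, an upper bound $\norm{F_\eps}_{\Hp_w} \lesssim \norm{f_\eps}_{D_\alpha} \simeq \eps^{(\alpha-1)/2}$. Cauchy--Schwarz on $F_\eps(\xi_\eps) = \sum_n a_n n^{-\xi_\eps}$ then gives
\begin{equation*}
\abs{F_\eps(\xi_\eps)}^2 \leq \norm{F_\eps}_{\Hp_w}^2 \cdot \sum_{n} w_n n^{-1-2\eps},
\end{equation*}
so $\sum_n w_n n^{-1-2\eps} \gtrsim \eps^{\alpha-1}$ as $\eps \to 0^+$, and Karamata's Tauberian theorem applied to the Stieltjes representation of this Dirichlet series in terms of $W(x)=\sum_{n\leq x}w_n$ translates this into $W(x) \gtrsim x/(\log x)^\alpha$, which is (a).

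For (a) $\Rightarrow$ (b), I would use the Paley--Wiener representation $f(s) = \int_0^\infty \phi(\xi) \e^{-\xi(s-1/2)} \dif \xi$ with $\norm{f}^2_{D_\alpha} \simeq \int_0^\infty \abs{\phi(\xi)}^2 \xi^\alpha \dif \xi$, and discretize this integral by sampling $\phi$ at the points $\Set{\log n : n \in \N}$ to produce $F = \sum a_n n^{-s}$. Concretely, I would take a smooth partition of unity $\set{\eta_n}$ on $\R_+$ with each $\eta_n$ supported in a small neighborhood of $\log n$, and set $a_n = n^{1/2} \int \phi(\xi) \eta_n(\xi) \dif \xi$; composing with a smooth cut-off in the time variable whose Fourier transform is supported outside $I$ ensures that the boundary distribution of $F-f$ is smooth on $1/2 + \im I$, so that $F-f$ extends analytically across the segment. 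The key estimate $\sum \abs{a_n}^2/w_n \lesssim \int \abs{\phi}^2 \xi^\alpha \dif \xi$ reduces, after a dyadic decomposition of $\R_+$ into blocks $[2^k, 2^{k+1}]$, to matching $\sum_{\log n \in [2^k,2^{k+1}]} 1/w_n$ with the reciprocal of the dyadic weight; the lower Chebyshev bound in (a) guarantees sufficient total mass of $w_n$ within each block, providing the norm control in precisely this direction.

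The main obstacle is the rigorous implementation of the (a) $\Rightarrow$ (b) direction. The naive sampling $a_n = n^{1/2}\int \phi \eta_n$ only approximates $f$, and the discretization error, although small in $L^2$, need not be smooth on the segment $1/2 + \im I$. A more refined construction is required: one must smooth $\phi$ on the Fourier side against a kernel whose Fourier transform is supported in $\R\setminus I$, so that the error has no distributional mass on $I$ after passing to the boundary, while simultaneously preserving the weighted $L^2$ norm. It is this interplay between the weighted $L^2$-norm on the Fourier side and the $\ell^2(1/w_n)$-norm on the Dirichlet coefficient side---entirely controlled by the distribution of $\set{\log n}$ against the weights $w_n$---that forces the use of both Chebyshev inequalities and makes the sampling theory for Paley--Wiener spaces (as recalled in Section 3) the natural tool.
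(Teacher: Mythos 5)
Both directions of your proposal contain genuine gaps. In the direction $(b)\Rightarrow(a)$, the fatal step is the appeal to Karamata's theorem. Testing against the reproducing kernels $k^\alpha_{\xi_\eps}$ at a single boundary point only yields the one--sided bound $\sum_n w_n n^{-1-2\eps}\gtrsim \eps^{\alpha-1}$, and a one--sided lower bound on a Dirichlet series near its abscissa does \emph{not} imply a pointwise lower bound on the partial sums for all large $x$: Tauberian theorems of Karamata type require a genuine asymptotic equivalence. Concretely, a weight whose partial sums $W(x)$ satisfy the upper Chebyshev bound but are constant on sparse long blocks $[\e^{N_k},\e^{2N_k}]$ still satisfies $\sum_n w_n n^{-1-2\eps}\simeq \eps^{\alpha-1}$ (the exponential kernel averages over $\log x\lesssim 1/\eps$ and does not see a single gap), yet violates $(a)$ at $x=\e^{2N_k}$. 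This is precisely why the paper does not test against kernels: it reformulates $(b)$ as surjectivity of the operator $R_I$, hence (by duality) as the statement that $\nu=\sum_n(\delta_{\log n}+\delta_{-\log n})w_n/n$ is a sampling measure for $\mathcal{F}W^{-\alpha/2}_0(I)$, and then uses the \emph{necessity} of the Beurling density condition to extract mass of $\nu$ in \emph{every} window $(\xi-rm,\xi)$, which is what a for-all-$x$ lower Chebyshev bound requires. (Your closed-graph argument for $\norm{F_\eps}_{\Hp_w}\lesssim\norm{f_\eps}_{D_\alpha}$ is also shaky, since the set of $G\in\Hp_w$ continuing analytically across $1/2+\im I$ is not a closed subspace, but that is secondary.)

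In the direction $(a)\Rightarrow(b)$, the quasi-interpolation construction $a_n=n^{1/2}\int\phi\,\eta_n$ cannot work as stated, because your key estimate requires controlling $\sum_{\log n\in[2^k,2^{k+1}]}1/w_n$ from above, whereas hypothesis $(a)$ only controls $\sum w_n$ from \emph{below}; these are incomparable, and indeed $1/w_n=\infty$ is allowed (the theorem permits $w_n=0$, as in Example \ref{D infinity example} where $w_n=0$ for every composite $n$, so no coefficient may be placed there at all). The solution operator must concentrate the coefficients $a_n$ where $w_n$ is large, and this is exactly what the duality route delivers: the lower frame inequality $(a')$ for the measure $\nu$ implies, via the adjoint of $R_I$ and Lemma \ref{approximate surjectivity}, that $R_I$ is onto $W^{\alpha/2}(I)$, which produces $F$ with $\Re(f-F)$ vanishing on the segment; the analytic continuation and the $L^\infty(\Gamma)$ bound then come from a Szeg\H{o}-type integral representation of $f-F$, not from a Fourier-side cut-off. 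You correctly sense that sampling theory is the natural tool and that the smoothness of the error is an obstacle, but the mechanism by which the two Chebyshev inequalities enter is through the density characterization of sampling measures for Paley--Wiener spaces (whence the restriction to sufficiently small $I$, via the threshold $\abs{I}/2\pi$), not through a block-by-block bound on $\sum 1/w_n$.
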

The proof relies in a crucial way on the theory of sampling sequences, and is given in Section \ref{proof of lower theorem}.
See also remarks \ref{lower theorem first remark} and \ref{lower theorem second remark} below on the optimality of this result.

The following result   should be considered an application   of the previous two theorems, and the proof is given in Section \ref{interpolation section}.  
\begin{theorem} \label{interpolation theorem}
	Let $(w_n)$ be a sequence of non-negative numbers and $\alpha \in (-\infty,1]$. If  both the
	Chebyshev-type inequalities \eqref{chebyshev-type inequalities}    hold      for this choice of $\alpha$ and $(w_n)$, then
	the following statements are true.
%
	\begin{itemize}
		\item[(a)] 
				The Carleson measures with compact support for $\Hp_w$ and $D_\alpha(\C_{1/2})$ 
				coincide.
		\item[(b)] The bounded interpolating sequences of $\Hp_w$ and $D_\alpha(\C_{1/2})$ coincide.
	\end{itemize}
\end{theorem}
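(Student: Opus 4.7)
The plan is to deduce both parts from the local equivalence between $\Hp_w$ and $D_\alpha(\C_{1/2})$ encoded in Theorems \ref{upper theorem} and \ref{lower theorem}. Throughout, fix an interval $I$ and a domain $\Gamma$ at positive distance from $\C \setminus \C_I$ so that the compactly supported $\mu$ (respectively, the bounded interpolating sequence) lies inside $\Gamma$.

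For part (a), I would prove the two inclusions separately. Assuming $\mu$ is Carleson for $\Hp_w$, take $f \in D_\alpha$ and apply Theorem \ref{lower theorem} to write $f = F + g$ with $F \in \Hp_w$ and $g$ analytic across $1/2 + \im I$. A closed graph argument applied to the canonical linear map $f \mapsto F$ (well-defined modulo functions extending analytically across $1/2 + \im I$) yields $\|F\|_{\Hp_w} \lesssim \|f\|_{D_\alpha}$, while Theorem \ref{lower theorem} already supplies $\|g\|_{L^\infty(\Gamma)} \lesssim \|f\|_{D_\alpha}$, so
\[
\int |f|^2 \, d\mu \lesssim \int |F|^2 \, d\mu + \mu(\Gamma)\|g\|^2_{L^\infty(\Gamma)} \lesssim \|f\|^2_{D_\alpha}.
\]
Conversely, given $F \in \Hp_w$ and $\mu$ Carleson for $D_\alpha$, I would build $\tilde F \in D_\alpha(\C_{1/2})$ with $\|\tilde F\|_{D_\alpha} \lesssim \|F\|_{\Hp_w}$ and $\|F - \tilde F\|_{L^\infty(\mathrm{supp}\,\mu)} \lesssim \|F\|_{\Hp_w}$ by a $\bar\partial$-cutoff: choose $\chi \in C^\infty_c(\Omega_I)$ with $\chi \equiv 1$ in a neighborhood of $\mathrm{supp}\,\mu$, solve $\bar\partial u = F\bar\partial\chi$ via Cauchy--Pompeiu, and set $\tilde F = \chi F - u$. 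Theorem \ref{upper theorem} ensures that $F\bar\partial\chi$ is square-integrable against the correct weight on its compact support, so $\tilde F$ is analytic on $\C_{1/2}$ and lies in $D_\alpha(\C_{1/2})$ with controlled norm, while on $\mathrm{supp}\,\mu$ the kernel $1/(z-s)$ in the Cauchy--Pompeiu representation of $u$ stays bounded. Applying Carleson gives $\int |F|^2 d\mu \lesssim \int |\tilde F|^2 d\mu + \|F\|^2_{\Hp_w} \lesssim \|F\|^2_{\Hp_w}$.

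For part (b), I would use the standard characterization that a bounded sequence $(s_j)$ is interpolating for a reproducing kernel Hilbert space $H$ precisely when (i) the point-mass measure $\sum_j \delta_{s_j}/\|k^H_{s_j}\|^2$ is Carleson for $H$, and (ii) the normalized reproducing kernels $\{k^H_{s_j}/\|k^H_{s_j}\|\}$ form a Riesz sequence in $H$. The two-sided Chebyshev hypothesis, together with an Abel-summation computation, yields $\|k^w_s\|^2_{\Hp_w} = \sum_n w_n n^{-2\sigma} \simeq (2\sigma - 1)^{\alpha - 1} \simeq \|k^\alpha_s\|^2_{D_\alpha}$ uniformly on compact subsets of $\C_{1/2}$; together with part (a), this yields the equivalence of condition (i). For (ii), the same Tauberian analysis supplies a decomposition $k^w_\xi(s) = k^\alpha_\xi(s) + r(s + \bar\xi)$ with $r$ bounded on compact subsets of $\{\Re > 1\}$, so the Gram matrices of the normalized reproducing kernels over our bounded index set differ by a compact operator on $\ell^2$, and the Riesz sequence property transfers.

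The main obstacle I anticipate is the refined Tauberian step in (b): while the leading-order asymptotic of $\sum_n w_n n^{-2\sigma}$ follows from straightforward Abel summation, the decomposition $k^w = k^\alpha + r$ with a sufficiently controlled error requires a Wiener--Ikehara-style argument tailored to $x/(\log x)^\alpha$ growth. The $\bar\partial$-cutoff construction in the second half of (a) is otherwise routine; its only subtlety is verifying globally that $\tilde F \in D_\alpha(\C_{1/2})$ with controlled norm, which relies on the local embedding provided by Theorem \ref{upper theorem}.
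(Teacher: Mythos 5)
Your part (a) is half right and half over-engineered. The direction ``$\mu$ Carleson for $\Hp_w$ $\Rightarrow$ Carleson for $D_\alpha$'' via Theorem \ref{lower theorem} plus the closed graph theorem is exactly the paper's argument. For the other direction the paper does something much simpler than your $\overline{\partial}$-cutoff: since the local embedding constant of Theorem \ref{upper theorem} depends only on $|I|$, one sums over unit intervals to see that $F\in\Hp_w$ implies $F(s)/s^N\in D_\alpha(\C_{1/2})$ globally for $N$ large, and then $\int|F|^2\,d\mu\simeq\int|F/s^N|^2\,d\mu\lesssim\norm{F/s^N}_{D_\alpha}^2\lesssim\norm{F}_{\Hp_w}^2$ because $|s|^N$ is bounded above and below on the compact support. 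Your Cauchy--Pompeiu construction could probably be pushed through, but you would still owe the verification that $\tilde F$ has controlled $D_\alpha$-norm when $0<\alpha\le 1$ (the norm there involves $|\tilde F'|^2$, so you must control the derivative of the $\overline{\partial}$-solution), and it buys nothing over the division trick.

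Part (b) has a genuine gap. Your plan rests on a decomposition $k^w_\xi(s)=k^\alpha_\xi(s)+r(s+\bar\xi)$ with $r$ bounded near the boundary, i.e. $\sum_n w_n n^{-s}=c(s-1)^{\alpha-1}+O(1)$. The hypothesis of the theorem is only the two-sided Chebyshev inequality $\sum_{n\le x}w_n\simeq x(\log x)^{-\alpha}$, which yields (Lemma \ref{reproducing lemma}) the two-sided \emph{norm} comparison $\norm{k^w_s}^2\simeq\norm{k^\alpha_s}^2$ but not an additive decomposition with bounded error; weights whose partial sums oscillate between two multiples of $x(\log x)^{-\alpha}$ satisfy the hypothesis while the difference $k^w_\xi-k^\alpha_\xi$ is unbounded. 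No Wiener--Ikehara refinement can rescue this, because the needed asymptotic simply is not implied by the hypothesis. Moreover, even granting the decomposition, ``the Gram matrices differ by a compact operator, so the Riesz sequence property transfers'' is false: a compact perturbation of an operator bounded below is Fredholm of index zero but may acquire a kernel, so the lower Riesz bound does not transfer. (Your characterization of interpolating sequences via (i) and (ii) also quietly upgrades the paper's definition, which asks only for surjectivity of the evaluation map, to boundedness plus surjectivity.) The paper's proof avoids kernel asymptotics altogether: it uses only the norm equivalence of Lemma \ref{reproducing lemma}, reduces to the tail operator $\mathcal{T}_N$ by a Lagrange-type argument with finite products $\prod_{j\le N}(1-p_j^{-(s-s_j)})$, and shows $\mathcal{T}_N$ is approximately surjective for large $N$ by solving the problem in $D_\alpha$, transferring the solution to $\Hp_w$ via Theorem \ref{lower theorem}(b), and absorbing the uniformly bounded error $\phi(s_n)/\norm{k^{\Hp_w}_n}$ (small in $\ell^2$ for large $N$) using Lemma \ref{approximate surjectivity}. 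You should replace your Gram-matrix step with an argument of this perturbative type.
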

We state and prove a simple lemma   which is used in the proof of this theorem as it offers a simple application of the 
Chebyshev-type inequalities. The proof of Theorem \ref{interpolation theorem} is given in Section \ref{interpolation section}.
\begin{lemma} \label{reproducing lemma}
		Let $(w_n)$ be a sequence of non-negative numbers and $\alpha \in \R$. If both the   Chebyshev-type 
		inequalities \eqref{chebyshev-type inequalities} hold for this $\alpha$ and $(w_n)$, then for $s = \sigma + \im t$ in $\C_{1/2}$ there are constants   such that
		\begin{equation*}
			 \norm{k_{s}^{D_\alpha}}_{D_\alpha}^2	\lesssim \sum_{n \in \N} w_n n^{-2\sigma} \lesssim  \norm{k_{s}^{D_\alpha}}_{D_\alpha}^2, \qquad \text{as} \quad \sigma \rightarrow 1/2.
		\end{equation*}
\end{lemma}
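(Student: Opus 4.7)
The plan is to connect $\sum_n w_n n^{-2\sigma}$ to $W(x):=\sum_{n\leq x} w_n$ via Abel summation, apply the two-sided Chebyshev bounds $W(x)\asymp x/(\log x)^\alpha$ for $x\geq 2$, and compare the resulting Laplace-type integral to the closed-form expression for $\|k_s^{D_\alpha}\|^2$ read off from the explicit reproducing kernels given in the excerpt.

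First I would observe that by the reproducing property $\|k_s^{D_\alpha}\|_{D_\alpha}^2 = k_s^{D_\alpha}(s)$, so the formulas stated in Section 2 yield
\begin{equation*}
\|k_s^{D_\alpha}\|_{D_\alpha}^2 = c_\alpha\,(2\sigma-1)^{\alpha-1} \quad (\alpha<1), \qquad \|k_s^{D_1}\|_{D_1}^2 = \pi^{-1}\log\!\bigl(1/(2\sigma-1)\bigr),
\end{equation*}
both depending only on $\sigma$, not on $t$. So the lemma reduces to showing that $\sum_n w_n n^{-2\sigma}$ is comparable to these quantities as $\sigma\to 1/2^+$.

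Next, I would apply Abel summation, using $W(N)N^{-2\sigma}\to 0$ for $\sigma>1/2$ (which itself follows from the upper Chebyshev bound), to write
\begin{equation*}
\sum_{n\geq 1} w_n n^{-2\sigma} = 2\sigma\int_1^\infty W(x)\,x^{-2\sigma-1}\,\dif x.
\end{equation*}
Splitting the integral at $x=2$, the piece over $[1,2]$ contributes a bounded quantity, which is negligible compared with the target asymptotics. For the tail, the two-sided Chebyshev hypothesis gives $W(x)\asymp x/(\log x)^\alpha$ on $[2,\infty)$, so
\begin{equation*}
\sum_{n\geq 1} w_n n^{-2\sigma} \asymp \int_2^\infty \frac{x^{-2\sigma}}{(\log x)^\alpha}\,\dif x.
\end{equation*}

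The key calculation is then the substitution $u=(2\sigma-1)\log x$, which transforms this integral into
\begin{equation*}
(2\sigma-1)^{\alpha-1}\int_{(2\sigma-1)\log 2}^{\infty} e^{-u}\,u^{-\alpha}\,\dif u.
\end{equation*}
For $\alpha<1$, as $\sigma\to 1/2^+$ the lower limit tends to $0$ and the integral converges to $\Gamma(1-\alpha)\in(0,\infty)$, giving $\sum_n w_n n^{-2\sigma}\asymp (2\sigma-1)^{\alpha-1}$, matching the kernel norm. The one step requiring care is $\alpha=1$: there the integrand $e^{-u}/u$ is non-integrable at $0$, but the remaining integral is exactly $\int_{(2\sigma-1)\log 2}^1 u^{-1}\dif u + O(1) \asymp \log(1/(2\sigma-1))$, which is precisely the growth of $\|k_s^{D_1}\|^2$; this is the spot where I would be most careful to track constants. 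The case $\alpha>1$, if it is intended at all here, would follow from the same substitution, since the integral is then uniformly bounded in $\sigma$.
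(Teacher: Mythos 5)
Your proposal is correct and follows essentially the same route as the paper: summation by parts to bring in the partial sums $W(x)$, the two-sided Chebyshev bounds, and then the asymptotics of the resulting weighted zeta-type expression as $2\sigma\to 1$. The only difference is that the paper stays with the discrete sum $\sum_n W_n n^{-2\sigma-1}\simeq\sum_n n^{-2\sigma}(1+\log n)^{-\alpha}$ and cites \cite[Lemma 3.1]{olsen_seip2008} for the final asymptotics, whereas you carry out that last step explicitly via the substitution $u=(2\sigma-1)\log x$, correctly isolating the logarithmic case $\alpha=1$.
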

\begin{proof}
	Denote the $k$-th partial sum of $w_n$ by $W_k$. We sum the left-hand side by parts, and then apply the mean value theorem for $\sigma \in (1/2, 1)$, to get
	\begin{equation*}
	 	\sum_{n \in \N} n^{-2\sigma} w_n = \sum_{n \geq 1} W_n (n^{-2\sigma} - (n+1)^{-2\sigma})
		\simeq \sum_{n \in \N} W_n n^{-2\sigma-1}.
	\end{equation*}
By an application of the Chebyshev-type inequalities, this is seen to be comparable to 
	\begin{equation*}
	 	\sum_{n \in \N} \frac{n^{-2\sigma}}{(\log n+1)^\alpha}.
	\end{equation*}
	The desired conclusion now follows exactly from \cite[Lemma 3.1]{olsen_seip2008}, which gives the behavior of these weighted zeta-type functions as $2\sigma \rightarrow 1$.
\end{proof}

Next, we record a stronger version of Theorem \ref{lower theorem}, as it is more suited for the examples we consider below. The proof is given in Section \ref{proof of lin-type theorem}.
\begin{theorem} \label{lin-type theorem}
	Suppose that for some constant $C>0$ we have
	\begin{equation} \label{lin-type hypothesis}
		\sum_{n \leq x} w_n \sim C \frac{x}{(\log x)^\alpha}, \qquad \text{as} \quad x \rightarrow \infty,
	\end{equation}
	then part $(b)$ of Theorem \ref{lower theorem} holds for every finite interval $I$.
\end{theorem}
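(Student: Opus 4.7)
The plan is to trace through the proof of Theorem \ref{lower theorem} and isolate exactly where the smallness of $I$ is used. By duality, constructing the approximating Dirichlet series $F \in \Hp_w$ should reduce to a sampling-type lower bound of the form
\[
\sum_n \frac{|\hat g(\log n)|^2}{n^{2\sigma}}\, w_n \;\gtrsim\; \Phi_\alpha(g,\sigma),
\]
uniform for $g$ supported on $I$ as $\sigma \to 1/2^+$, where $\Phi_\alpha(g,\sigma)$ is the $D_\alpha$-side quantity that encodes the desired boundary representation. Under the two-sided Chebyshev inequalities alone, the proof of Theorem \ref{lower theorem} yields this only when $|I|$ is small enough to absorb the gap between the upper and lower Chebyshev constants $A$ and $B$. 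Under hypothesis \eqref{lin-type hypothesis}, $A$ and $B$ collapse to a single asymptotic value $C$, which should remove the size restriction entirely.

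Concretely, I would re-derive this sampling bound via the Abel-summation device already used in the proof of Lemma \ref{reproducing lemma}. Writing $W_N = \sum_{n\leq N} w_n$ and $\varphi_n := |\hat g(\log n)|^2 n^{-2\sigma}$, summation by parts gives
\[
\sum_n \varphi_n\, w_n \;=\; \sum_n W_n (\varphi_n - \varphi_{n+1}).
\]
Substituting the asymptotic $W_n = Cn(\log n)^{-\alpha}(1+o(1))$ and reversing the summation by parts produces
\[
\sum_n \varphi_n\, w_n \;=\; C \sum_n \frac{|\hat g(\log n)|^2}{n^{2\sigma}(\log n)^\alpha}\,(1+o(1)),
\]
with the $o(1)$ uniform over $g$ in a bounded subset of $L^2(I)$. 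This locally identifies $\Hp_w$ with the model space $\Hp_{\tilde w}$ associated to $\tilde w_n = 1/(\log n)^\alpha$, for which the ratio of Chebyshev constants tends to one, so the sampling bound—and hence the extension property—holds for every finite $I$.

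The main obstacle I anticipate is justifying the uniform $o(1)$ in the Abel sum as $\sigma \to 1/2^+$: the asymptotic $W_n \sim Cn(\log n)^{-\alpha}$ only holds in an averaged sense, so its fluctuations must be controlled against the oscillatory differences $\varphi_n - \varphi_{n+1}$. I expect to exploit the regularity estimate $|\hat g(\xi)| \leq |\hat g(k)| + \|\hat g'\|_{L^2(k,k+1)}$ used in the proof of Theorem \ref{upper theorem}, together with a dyadic decomposition, to bound these differences and absorb the error into the main term. Once this uniformity is in hand, the analytic continuation across $1/2 + \im I$ and the $L^\infty$-estimate on $\Gamma$ follow verbatim from the contour-deformation step in the proof of Theorem \ref{lower theorem}.
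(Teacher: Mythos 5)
Your reduction of the theorem to a two-sided sampling inequality for the measure $\nu = \sum_n \delta_{\log n}\, w_n/n$ tested against $|\hat{g}|^2$ for $g$ supported in $I$ is the right starting point (this is exactly Lemma \ref{onto lemma}$(a')$), and you are right that the exact asymptotic \eqref{lin-type hypothesis} is what removes the restriction on $|I|$. But the mechanism you propose for exploiting it does not work. The claimed identity
\begin{equation*}
\sum_n \varphi_n w_n = C \sum_n \frac{|\hat{g}(\log n)|^2}{n^{2\sigma}(\log n)^\alpha}\,(1+o(1)),
\end{equation*}
uniformly over $g$ in the unit ball of $L^2(I)$, is false, not merely hard to justify. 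After summation by parts the error is $\sum_n o\bigl(n(\log n)^{-\alpha}\bigr)(\varphi_n - \varphi_{n+1})$, and since $\varphi_n - \varphi_{n+1}$ changes sign, the only available bound replaces it by $\varphi_n + \varphi_{n+1}$, which makes the error term of the same order as the main term; no regularity estimate on $\hat{g}$ or dyadic decomposition can repair this, because the statement itself fails. Concretely, take $\alpha = 0$ and $w_n = 2$ for even $n$, $w_n = 0$ for odd $n$: then $\sum_{n \leq x} w_n \sim x$, but $\sum_n |\hat{g}(\log n)|^2 w_n/n$ samples $\hat{g}$ only at the points $\log(2m)$, and one can choose band-limited $g$ (say with $\hat{g}$ peaked at $\log 3$) for which the ratio to $\sum_n |\hat{g}(\log n)|^2/n$ stays bounded away from $1$. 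What survives, and is all that is needed, is two-sided comparability with absolute constants -- but that is precisely the sampling statement to be proved, so the proposed reduction to the model weight $\tilde{w}_n = (\log n)^{-\alpha}$ is circular.

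The actual use of \eqref{lin-type hypothesis} in the paper is different, and it is the idea you are missing. The restriction to small $I$ in Theorem \ref{lower theorem} comes from the Beurling--Jaffard--Seip density criterion: the lower Chebyshev inequality only guarantees mass for $\nu$ in windows of some fixed length $q$, producing a sampling set of density $1/q$ and hence sampling only for $|I| < 2\pi/q$. Under the exact asymptotic one instead verifies that $\nu$ is $(-\alpha/2)$-continuous at infinity, i.e.\ $\nu([\xi,\xi+h]) \leq \epsilon(1+\xi^2)^{-\alpha/2}$ for small $h$ and large $|\xi|$; this is exactly where $\sim$ rather than $\simeq$ is needed, since the window mass is an upper estimate minus a lower estimate, of the form $(C+\epsilon)\xi^{-\alpha} - (C-\epsilon)\e^{-h}(\xi-h)^{-\alpha}$, and this only becomes small when the two constants coincide. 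Combined with the uniform lower bound $\inf_\xi \nu([\xi-L,\xi])(1+\xi^2)^{\alpha/2} > 0$ already established in the proof of Theorem \ref{lower theorem}, Lin's theorem in the form of Corollary \ref{lin-type corollary} then gives sampling for $\mathcal{F}W_0^{-\alpha/2}(I)$ for every finite $I$ at once, with no density--length tradeoff. The analytic continuation and the $L^\infty$ estimate on $\Gamma$ do then follow as in Theorem \ref{lower theorem}, as you say, but the heart of the proof is the geometry of the measure $\nu$, not an asymptotic identification of $\Hp_w$ with a model space.
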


Our first example asserts that the above results generalize those mentioned in the introduction.
\begin{example}[The Dirichlet-Hardy space and McCarthy's spaces] \label{dirichlet-hardy and mccarthy example}
	Let $w_n = (1 + \log n)^\alpha$. For $\alpha=0$, we have $\Hp_w = \Hp^2$, and it is trivial to estimate the partial sums.
 So, theorems \ref{upper theorem} and \ref{interpolation theorem} reduce to the local embedding and interpolation theorem, respectively, of the introduction, as well as the statement on the local equivalence of Carleson measures. Note that Theorem \ref{lower theorem} reduces to a   weaker result than the one on local boundary functions in the introduction, while Theorem \ref{lin-type theorem}, which holds in this and all of the following examples, reduces to exactly this theorem. For general $\alpha \leq 1$, we get the same results, except in this case we have to compare the space $\Hp_w$ to   $D_\alpha(\C_{1/2})$. In this case,  the spaces $\Hp_w$ were introduced in \cite{mccarthy2004}, and the corresponding results are contained in \cite{olsen_saksman2010, olsen_seip2008}. Recall that the Dirichlet-Hardy space is identified with $H^2(\D^\infty)$ by Bohr's observation, but     for $\alpha \neq 0$ there is no such natural identification as the monomials $z^n$ and $z^m$ may have different norms even if $\abs{m} = \abs{n}$ for multi-indices $m,n$.
\end{example}
%
%
%
%
%
%
%
Examples \ref{bergman example} through \ref{D infinity example} explore natural analogues on $\D^\infty$ for the scale of spaces on $\D$ which include the Bergman, Hardy and Dirichlet spaces.  To fix notation, we let $f(z) = \sum_{n \in \N} a_n z^n$, and define norms by
\begin{equation} \label{a norm}
	\norm{f}^2_{A_\beta(\D)} = \int_{\D} \abs{f(z)}^2 \dif m_\beta(z) = \sum_{n \in \N} \abs{a_n}^2 \frac{n!}{(\beta+1) (\beta+2) \cdots (\beta + n)},
\end{equation}
where $\dif m_\beta(z)  = ((\beta+1)/\pi) (1 - r^2)^{\beta} r \dif r \dif \theta$ is a probability measure on $\D$ for $\beta > 0$, and
\begin{equation} \label{d norm}
	\norm{f}^2_{D_\alpha(\D)}  =  \sum_{n \in \N} \abs{a_n}^2 (n+1)^\alpha. 
\end{equation}
(Note that the  integral norm in \eqref{a norm} breaks down for $\beta \leq 0$ unless suitably modified. However, for $\beta \in (-1,0)$ we only consider the coefficient norm.)
Here we follow the notation of \cite{hedenmalm_korenblum_zhu2000} and \cite{seip2004book}, respectively. To define the spaces $A_\beta(\D^d)$ and $D_\alpha(\D^d)$ for $d \in \N \cup \{ \infty \}$ while avoiding tedious notation,  we content ourselves in saying that for the space $D_\alpha(\D^d)$,   the monomials $z_1^{\nu_1} \ldots z_d^{\nu_d}$ form  an orthogonal basis with norm the square root of $(\nu_1 + 1)^\alpha \cdots (\nu_d + 1)^\alpha$. For $d=\infty$,   the union of these systems of monomials form the   orthogonal basis. For the spaces $A_\beta(\D^d)$, with $\beta > -1$, the corresponding statements holds in terms of the coefficient norms, while for $\beta >0$ one retains the identity
\begin{equation}  \label{bergman norm}
	\norm{f}_{A_\beta(\D^d)}^2= \int_{\D^d} \abs{f(z_1, \ldots, z_d)}^2 \dif m_\beta(z_1) \ldots \dif m_\beta (z_d). 
\end{equation}

Since 
\begin{equation*}
 	 \frac{(1+ \beta)(2+\beta) \cdots (n + \beta)}{n!} \simeq (1+ n)^\beta,
\end{equation*}
it follows that on the unit disk, or in fact on any finite polydisk, these spaces have equivalent norms for $\alpha = - \beta$ and $\beta > -1$. This no longer holds  on $\D^\infty$.  

%
%
\begin{example}[The spaces $\mathscr{A}_\beta$] \label{bergman example}
	For $\gamma >0$ we define the numbers $d_\gamma(n)$ by the relation $\zeta(s)^\gamma = \sum_{n \in \N} d_\gamma(n) n^{-s}$. By considering the Euler product,
	it is not hard to see that $d_\gamma(p^\nu) = \gamma(\gamma+1) \ldots (\gamma + \nu - 1)/\nu!$. An explicit formula now extends easily to $n \in \N$ since  $d_\gamma(kl) = d_\gamma(k) d_\gamma(l)$ whenever $k$ and $l$ are relatively prime. 
%
%
	We now define   $\mathscr{A}_\beta := \Hp_w$ for the weights $w_n = d_{\beta+1}(n)$.  By the above discussion and the Bohr correspondence, the spaces $\mathscr{A}_\beta$ are isometrically identified with the space $A_\beta(\D^\infty)$. Moreover, 
	the space $\mathscr{A}_\beta$   has translates of $\zeta(s)^\beta$ as its reproducing kernel, and 
	it follows by \cite[Theorem 14.9]{ivic2003book} that for some constant $C>0$, 
	\begin{equation*}
		\sum_{n \leq x} w_n   \sim C {x} (\log x)^\beta.
	\end{equation*}
	Hence,   in the sense of  theorems \ref{upper theorem}, \ref{lower theorem}, \ref{interpolation theorem} and \ref{lin-type theorem},   the space $\mathscr{A}_\beta$ behaves locally like $D_{-\beta}(\C_{1/2})$, as could be expected. 
\end{example}

\begin{example} \label{log zeta example}
	In the limit as $\beta \rightarrow -1^+$, the previous example leads us to also consider the case when $\Hp_w$ is the space of Dirichlet series with   the reproducing kernel given by translates of
	\begin{equation*}
		\log \zeta(s) =  \sum_{p} \sum_{j} \frac{p^{-js}}{j} := \sum_{n \in \N} \frac{\Lambda(n)}{\log n} n^{-s}.
	\end{equation*}
	Here $\Lambda(n)$ is the von Mangoldt function. By a calculation, which   gave   the first proof of the prime number theorem, von Mangoldt \cite{vonmangoldt1895}
	found that 
	\begin{equation*}
		\sum_{n \leq x} \Lambda(n) \sim x. 
	\end{equation*}
	(The partial sum on the left-hand side of this asymptotic formula is usually called the Chebyshev function and is denoted by $\psi(x)$.) It now follows that   the weights $w_n = \Lambda(n)/\log n$ satisfy
	\begin{equation*}
		\sum_{n \leq x}  w_n \sim \frac{x}{\log x},
	\end{equation*}
	whence, by theorems \ref{upper theorem}, \ref{lower theorem}, \ref{interpolation theorem} and \ref{lin-type theorem},   the space $\Hp_w$ behaves locally like $D_{1}(\C_{1/2})$. Observe that by the same arguments, the space $\Hp_{w'}$ with  weights $w'_n = \Lambda(n)$, which has translates of the derivative of $\log \zeta(s)$ as its reproducing kernel,   behaves locally like the Hardy space $H^2(\C_{1/2})$.
\end{example}

%
\begin{example}[The spaces $\mathscr{D}_\alpha$] \label{D beta example}
	Let $d(n)$ denote the number of divisors of the $n$'th integer.
	Explicitly, if $n = p_1^{\nu_1} \cdots p_k^{\nu_k}$, where $p_k$ is the $k$'th prime number and $\nu_k \in \N$, then $d(n) = (\nu_1 + 1) \cdots (\nu_k + 1)$.
	For $\alpha \in \R$, we set $\mathscr{D}_\alpha := \Hp_w$ for the weight $w_n =1/ d(n)^{\alpha}$. 
	As with the weights of the previous two examples, it is very irregular, since highly composite numbers and prime numbers may occur side by side among the natural numbers. 
	%
	%
	%
	Still, it follows by Ramanujan \cite{ramanujan16} and Wilson \cite{wilson22} that there exists a constant $C>0$ such that
	\begin{equation*}
		\sum_{n \leq x} w_n \sim  Cx (\log x)^{2^{-\alpha} -1 }.
	\end{equation*}
	Hence,   by theorems \ref{upper theorem}, \ref{lower theorem}, \ref{interpolation theorem} and \ref{lin-type theorem},   the space $\mathscr{D}_\alpha$ behaves locally like $D_{1 - 2^{-\alpha}}(\C_{1/2})$. 
	\end{example}
	The previous example is surprising as one would expect the local behavior of the space $\mathscr{D}_\alpha$ to correspond to the classical space $D_\alpha(\C_{1/2})$. We remark that in this case, the embedding for $\alpha < 0$ was first observed by Seip \cite{seip2009unpublished}.
\begin{example}[The space $\mathscr{D}_\infty$]\label{D infinity example}
	In the previous example, as $\alpha \rightarrow \infty$, it becomes more difficult for functions of a given norm to have coefficients corresponding to composite numbers. So, as a limit space as $\alpha \rightarrow \infty$, we suggest
	\begin{equation*}
		\mathscr{D}_\infty =  \Set{ \sum_{p \; \mathrm{prime}} a_p p^{-s} : \sum_{p \; \mathrm{prime}} \abs{a_p}^2 < \infty  }.
	\end{equation*}
	In other words, we make the choice of weights
	\begin{equation*}
		w_n = \left\{ \begin{aligned} \; 1  & \quad \text{if} \; n \; \text{is prime}, 
		\\ \; 0 & \quad \text{else}. \end{aligned} \right.
	\end{equation*}
	By the Bohr correspondence, this space is identified with the subspace of $\Hp^2$ 
	spanned by monomials, i.e. 
	\begin{equation*}
		\left\{ \sum_{n \in \N} a_n z_n  :  \sum_{n \in \N} \abs{a_n}^2 < \infty \right\}. 
	\end{equation*}
	By the prime number theorem 
	\begin{equation*}
		\sum_{n \leq x} w_n  \sim \frac{x}{\log x},
	\end{equation*}
	whence we conclude that 
	the space $\mathscr{D}_\infty$ behaves locally like the space $D_1(\C_{1/2})$ in the sense of theorems  \ref{upper theorem}, \ref{lower theorem}, \ref{interpolation theorem} and \ref{lin-type theorem}.
\end{example}
To better see the connection between two previous examples, we consider Figure \ref{graph}. 
\begin{figure}
	\includegraphics{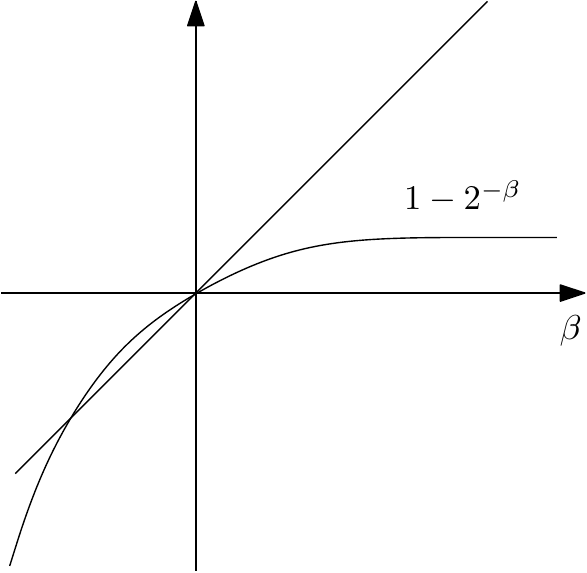}
	\label{graph}
	\caption{} 
\end{figure}
We observe that as the parameter $\alpha$ of example \ref{D beta example} goes to $\infty$ then what we can call the smoothness parameter $1-2^{-\alpha}$ goes asymptotically to $1$. This corresponds to the choice of $\mathscr{D}_\infty$ as an endpoint space, and its local connection to the space $D_1(\C_{1/2})$ appears natural. (See also Section 8.)

Next, we move on to an example on the countably infinite dimensional ball 
\begin{equation*}
	 \mathbb{B}_\infty = \bigg\{ (z_1, z_2, \cdots) \; : \; \sum_{i \in \N} \abs{z_i}^2 < 1 \bigg\}.
\end{equation*}
 In this context, we invoke Ikehara's tauberian theorem explicitly. As indicated in the introduction, it allows one to deduce the behavior of the growth of a sequence by considering functional theoretic properties of a related Dirichlet series. The version we state is due to Delange \cite{delange1954}.
\begin{theo}[Ikehara-Delange]
 	Let $A(x)$ be a non-decreasing function with support in $(0,\infty)$, and for which the function
$F(s) = \int_0^\infty   A(x) x^{-s-1} \dif x$
	converges for $\sigma > \sigma_0 \geq0$. Suppose that $F(s)$ is holomorphic on a neighborhood of the punctured half-plane $\C_{\sigma_0} \backslash \{ \sigma_0 \}$, and that for $\beta < 1$ it holds in this neighborhood   that
	\begin{equation} \label{delange condition}
		F(s) =  g(s) (s - \sigma_0)^{-1+\beta} +  h(s),
	\end{equation}
	for   functions $g, h$ analytic on a neighborhood of $\C_{\sigma_0}$ with $g(\sigma_0) \neq 0$.  Then  as $x \rightarrow \infty$ it follows that
	\begin{equation*}
		A(x)  \sim  {c_\beta} \frac{x^{\sigma_0}}{ \log^{\beta} x}.
	\end{equation*}
	For $\beta = 1$, the same conclusion holds   when \eqref{delange condition} is replaced by 
	\begin{equation*}
		F(s) =  g(s) \log  \frac{1}{s-\sigma_0}  + h(s).
	\end{equation*}
\end{theo}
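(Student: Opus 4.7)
My strategy is to follow Delange's original tauberian argument, which is a fractional-power refinement of the Wiener--Ikehara method. The first step is reformulation: the substitution $x = e^u$ turns $F(s)$ into the Laplace transform of the non-decreasing function $u \mapsto A(e^u)$ (extended by zero to $u<0$), and a further translation $s \mapsto s + \sigma_0$ moves the abscissa of convergence to the origin and the singular datum to $\tilde{g}(z) z^{\beta-1}$ (or $\tilde{g}(z) \log(1/z)$ in the $\beta=1$ case) at $z=0$, where $\tilde{g}$ is analytic across $\Re z = 0$. This reduces the problem to a boundary-behaviour statement for a Laplace transform.

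Next, I would carry out the Wiener--Ikehara kernel argument. Fix a smooth non-negative test function $\phi$ on $\R$ whose Fourier transform $\hat\phi$ is supported in $[-T,T]$ and positive on its support (for example a squared Fej\'er-type kernel). By a Parseval / contour-inversion identity applied to $F$ minus its singular model, one expresses $\int A(e^{u-v}) \phi(v) \dif v$, with the model contribution subtracted off, as a bounded integral along $\Re s = \sigma_0$. The hypothesis ensures that the resulting integrand is continuous and $\hat\phi$ has compact support, so the Riemann--Lebesgue lemma gives convergence to zero as $u \to \infty$.

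The singular model is inverted explicitly. Using the Mellin--Barnes (Hankel-contour) representation, $z^{\beta-1}$ corresponds under Laplace inversion to $u^{-\beta}/\Gamma(1-\beta)$, so the model contributes $g(\sigma_0) e^{\sigma_0 u}/(\Gamma(1-\beta)\, u^{\beta})$, which fixes the constant $c_\beta = g(\sigma_0)/\Gamma(1-\beta)$ in the desired asymptotic for $A(e^u)$. The logarithmic endpoint $\beta = 1$ needs a separate inversion since $\Gamma(1-\beta)$ is singular there; one uses instead the Mellin pair $\log(1/z) \leftrightarrow 1/u$, giving $c_1 = g(\sigma_0)$. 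Finally, monotonicity of $A$ supplies the classical Tauberian squeeze: an averaged upper and lower bound on $A$ is converted into the same pointwise bound by shrinking the averaging window, using that $A$ is non-decreasing so that $A(x(1-\eps)) \leq (\phi * A)(x) / \|\phi\|_1 \leq A(x(1+\eps))$ up to controlled error terms.

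The main obstacle is the fractional branch-cut singularity of $(s-\sigma_0)^{\beta-1}$, which rules out the simple residue calculus that suffices for Ikehara's original result with $\beta=0$. Handling it requires a Hankel-contour deformation together with uniform estimates on the deformed integrand for $\Re s$ slightly past $\sigma_0$; this is precisely where the analyticity hypothesis on $g$ and $h$ in a full neighbourhood of $\overline{\C_{\sigma_0}}$, rather than mere continuous extension to the boundary line, comes into play.
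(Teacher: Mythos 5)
This theorem is not proved in the paper: it is quoted as an external result, attributed to Delange \cite{delange1954} (with \cite{korevaar2005} cited for more general versions), and then applied in Example \ref{besov example}. So there is no in-paper proof to compare your argument against; what can be assessed is whether your outline is a faithful reconstruction of the standard proof. It is. The reduction $x=\e^u$, $s\mapsto s+\sigma_0$ turning $F$ into the Laplace transform of $B(u)=A(\e^u)\e^{-\sigma_0 u}$, the subtraction of the explicit singular model, the Fej\'er-kernel/Parseval step followed by Riemann--Lebesgue, the identification $c_\beta=g(\sigma_0)/\Gamma(1-\beta)$ via $\int_0^\infty u^{-\beta}\e^{-zu}\,\dif u=\Gamma(1-\beta)z^{\beta-1}$ (with the exponential-integral pair $1/u\leftrightarrow\log(1/z)$ at $\beta=1$), and the final monotonicity squeeze are exactly the ingredients of Delange's fractional-power extension of Wiener--Ikehara. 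Two places in your sketch are more schematic than the rest and would need to be written out in a complete proof: the "Parseval / contour-inversion identity" is doing real work (one must justify moving the integration to the line $\Re s=\sigma_0$ and verify local integrability of $F$ minus the model there, which is where the hypothesis that $g,h$ are analytic on a full neighbourhood of $\overline{\C_{\sigma_0}}$ enters, as you note); and the two-sided Tauberian squeeze is not symmetric --- one first extracts an a priori upper bound $B(u)=\Bigoh{u^{-\beta}}$ from the averaged asymptotic using $\phi\geq 0$ and monotonicity of $A(\e^u)$, and only then uses that bound to control the tails in the lower estimate. Neither point is a gap in the method, only in the level of detail.
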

We remark that this result can be stated in greater generality (see   \cite{korevaar2005}).
Also note that we wish to apply this theorem for $\sigma_0 \neq 1$, in which case the functions in the space $\Hp_w$ will be analytic on $\C_{\sigma_0/2}$. We can still apply theorems \ref{upper theorem} through \ref{lin-type theorem} by considering the shift $F(s - 1/2 + \sigma_0/2)$.
This is tantamount to replacing the weight $(w_n)_{n \in \N}$ by $(n^{1-2\sigma_0} w_n)_{n \in \N}$.
\begin{example}[The Dirichlet-Besov-Sobolev spaces $\mathscr{B}^\gamma_2$] \label{besov example}
	For $\gamma \geq 0$ the classical Besov-Sobolev space on the countably infinite dimensional ball is given as
	\begin{equation*}
		B^\gamma_2(\mathbb{B}_\infty) = \left\{ \sum_{\nu } a_\nu z^\nu :  \sum_{\nu } \frac{ \abs{a_\nu}^2}{\binom{\gamma + \abs{\nu}}{\nu}} < \infty. \right\}.
	\end{equation*}
	Here the multinomial coefficient is defined by
	\begin{equation*}
		\binom{\abs{\nu} + \gamma}{\nu} = \frac{\gamma(\gamma+1) \cdots (\abs{\nu} + \gamma - 1)}{\nu_1 ! \nu_2 ! \ldots },
	\end{equation*}
	when $\gamma >0$ and with $(\abs{\nu}-1)!$ as the denominator in the case that $\gamma = 0$.
	The significance of these coefficients is that the reproducing kernel is given by
	\begin{equation*}
		 K_\gamma(z,w) =  \sum_{\nu }  \binom{\abs{\nu} + \gamma}{\nu} z^\nu \overline{w}^\nu =  \left\{ \begin{array}{rc}  (1 -  \sum z_j \bar{w}_j )^{-\gamma} & \text{if} \; \gamma > 0, \\ \; - \log ({1 - \sum z_j \bar{w}_j } ) & \text{if} \; \gamma=0. \end{array} \right.
	\end{equation*}
	Applying the Bohr correspondence, 
	the space $B^\gamma_2(\mathbb{B}_\infty)$ is seen to
	be isometrically isomorphic to the space of Dirichlet series $\mathscr{B}^\gamma_2$ with the
	reproducing kernel
	\begin{equation*}
		 {k}_\gamma(s,\xi) 
		=  \left\{ \begin{array}{rc}  (1 - \zeta_P(s + \bar{\xi})  )^{-\gamma}  & \text{if} \; \gamma > 0, \\ - \log ( 1 - \zeta_P(s + \bar{\xi})) & \text{if} \; \gamma = 0, \end{array} \right.
	\end{equation*}
	where $\zeta_P(s) = \sum_{p \; \text{prime}} p^{-s}$.
	Let $\rho>1$ be the unique number for which $\zeta_P(\rho) = 1$. 
	Clearly, $\zeta_P$ is analytic on a neighborhood of $\C_{\rho}$, and has a simple zero at $s= \rho$.
	So, by the Ikehara-Delange theorem in combination with theorems  \ref{upper theorem}, \ref{interpolation theorem} and \ref{lin-type theorem}, the space $\mathscr{B}^\gamma_2$ behaves locally like the space $D_{1-\gamma}(\C_{\rho/2})$. In particular we note that the space $\mathscr{B}^1_2$, the Dirichlet-Drury-Arveson space, behaves locally like $H^2(\C_{\rho/2})$.
\end{example}
We mention one last example  that appeared in \cite{mccarthy2004}. It was introduced  as an  example of a Hilbert space of Dirichlet series having a complete Pick kernel.
 Function spaces with this property  
 have attracted interest in the last decade or so. For the definition and related results see e.g.  \cite{agler_mccarthy2002, sawyer2009, seip2004book}.
(The spaces in the previous example have the complete Pick property when $\gamma \in [0,1]$.)
\begin{example}[McCarthy's space]   \label{complete pick example} \label{last example}
	Let $F(n)$ be the number of non-trivial ways to factor the number $n$, counting order. E.g., $F(10) = 3$ since $10, 2 \times 5$ and $5 \times 2$ are the non-trivial ways to factor this number. With this, McCarthy's space is $\Hp_w$ with the weight $w_n = 1/F(n)$. 
	It is straight-forward to check that the reproducing kernel for this space is given by translates of the function
	\begin{equation*}
		 \sum_{n \in \N} F(n) n^{-s} =  \frac{1}{2 - \zeta(s)}.
	\end{equation*}
	From this relation, it follows that $\Hp_w$ has the complete Pick property. If $\rho_1$ is the number satisfying $\zeta(\rho_1) = 2$, it follows as in the previous example, that $\Hp_w$ behaves locally   as   $H^2(\C_{\rho_1/2})$. Note that it is possible to compute weights and determine the local behavior for an entire scale of these spaces by taking the reproducing kernels to some power $\gamma > 0$, or by using the logarithm.  
\end{example}

%
%

\section{preliminaries on sampling sequences and measures}
In this section we give the necessary background on sampling sequences and measures.

\subsection{Sobolev spaces}
To discuss the boundary behavior of functions in spaces which locally behave like $D_\alpha(\C_{1/2})$, we need to introduce the restricted Sobolev spaces $W^\alpha(I)$ and their dual spaces $W^{-\alpha}_0(I)$.

Denote the space of tempered distributions by $\mathcal{S}'(\R)$. For $\alpha \in \R$, we first define the  \emph{unrestricted} Sobolev space 
\begin{equation*}
	 W^\alpha(\R) = \Set{ u \in \mathcal{S}'(\R) : \; \hat u\in L^2_{\mathrm{loc}}\; \mbox{and}\; \| u\|_{W^\alpha}:=\int_\R \abs{\hat{u}(\xi)}^2 (1 + \abs{\xi}^2)^{\alpha} \dif \xi < \infty}.
\end{equation*}
For an open and (possibly unbounded) interval $I \subset \R$, we let $W^\alpha_0(I)$ be the subspace of $W^\alpha(\R)$ that consists of distributions having support in $I$. By a scaling and mollifying argument one easily checks that this subspace coincides with the closure of $\mathcal{C}^\infty_0(I)$ in the norm of $W^\alpha(\R)$. 
With this, we define the  Sobolev space  
\begin{equation*}
	 W^\alpha(I) := W^\alpha(\R)/ W^{\alpha}_0(\R \backslash \bar{I}^C).
\end{equation*}
In other words, the quotient space $W^\alpha(I)$  contains the restrictions of distributions in $W^\alpha(\R)$ to the interval $I$ with the norm
\begin{equation*}
	\norm{u}_{W^\alpha(I)}  =  \inf_{\underset{v|_I = u}{v \in W^\alpha(\R)}} \norm{v}_{W^\alpha(\R)}.
\end{equation*}
Under the natural pairing $(u,v) = \int_\R \hat{u}(\xi) \hat{v}(\xi) \dif \xi$, 
the dual space of $W^\alpha(I)$ is isometric to $W^{-\alpha}_0(I)$, as is readily verified.
It is well-known that the functions  in the spaces $D_{\alpha}(\C_{1/2})$ have distributional boundary values that belong to the Sobolev spaces $W^{\alpha/2}(I)$ on bounded and open intervals $I \subset \R$. 

\subsection{Sampling sequences and measures for the Paley-Wiener space}
Let $H$ be a Hilbert space of functions on some set $\Omega$, with the property that for every $\mu \in \Omega$ there exists a reproducing kernel $k_\mu \in H$.
A sequence $\Lambda = (\lambda_j)$ is a sampling sequence for  $H$ if for all $f \in  H$ it holds
that
\begin{equation*}
	\sum_{\lambda \in \Lambda} \frac{\abs{f(\lambda)}^2 }{ k_{\lambda}(\lambda) } \simeq \norm{f}^2_H.
\end{equation*}
For a bounded interval $I \subset \R$, 
we let $L^2(I)$ denote the subspace of $L^2(\R)$ consisting of functions with support in $I$. The Paley-Wiener space $\PW(I)$ may then be defined as the image of $L^2(I)$ under the Fourier transform. If the interval $I$ is centered at the origin, this space   may also be described
as the space of entire functions of exponential type $\abs{I}/2$ that are square summable on $\R$.

The sampling sequences for $\PW(I)$ on the real line have been given a partial description by Seip \cite{seip1995}  and Jaffard \cite{jaffard1991}, following an idea of Beurling  \cite[p. 345]{beurling1989},  in terms of the   density
\begin{equation*}
	D^-(\Lambda) = \lim_{r \rightarrow \infty} \inf_{\xi \in \R} \frac{\abs{(\xi,\xi+r) \cap \Lambda}}{r}.
\end{equation*}
The result can be stated as follows. 
\begin{theo}[Beurling-Jaffard-Seip]
	Let $\Lambda \subset \R$ be a sequence of distinct numbers. 
	\begin{itemize}
		\item[(i)] If $\Lambda$ is sampling for $\PW(I)$, then $D^-(\Lambda) \geq \frac{\abs{I}}{2\pi}$. 
		\item[(ii)] If $D^-(\Lambda) >\frac{\abs{I}}{2\pi}$, then $\Lambda$ is sampling for $\PW(I)$.
	\end{itemize}
\end{theo}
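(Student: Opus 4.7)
The plan is to handle the two directions separately, using Beurling's weak-limit / compactness machinery for the necessary direction and a Kadec-type lattice perturbation argument for the sufficient direction. Throughout, the Plancherel-Polya inequality provides the universal upper bound $\sum_{\lambda \in \Lambda}|f(\lambda)|^2 \lesssim \norm{f}^2_{\PW(I)}$ for uniformly discrete $\Lambda$, so both halves reduce to establishing the lower bound.

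For part $(i)$, I argue the contrapositive: assuming $D^-(\Lambda) < \abs{I}/(2\pi)$, I produce a nonzero $f \in \PW(I)$ vanishing on a suitable ``weak limit'' of $\Lambda$, contradicting the sampling inequality. First, I show that a sampling sequence can be taken to be uniformly discrete (otherwise the Plancherel-Polya upper bound fails), which makes the space of translates precompact in an appropriate Fr\'echet topology on locally finite sets. Extract a weak limit $\Lambda^*$ of translates $\Lambda - t_j$; the lower density is preserved under such limits, and a short argument shows that $\Lambda^*$ inherits the uniqueness property from $\Lambda$ (if a nonzero $f \in \PW(I)$ vanished on $\Lambda^*$, then suitable translates of $f$ would have small values on $\Lambda$, contradicting sampling). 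The contradiction is then derived via a Jensen-type estimate for entire functions of exponential type $\abs{I}/2$: a zero set of density strictly less than $\abs{I}/(2\pi)$ cannot be a uniqueness set, since one can explicitly construct a nonvanishing infinite product of $\mathrm{sinc}$-type factors with prescribed zeros on any sufficiently sparse uniformly discrete set.

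For part $(ii)$, the strategy is to find, inside $\Lambda$, a subset that is a bounded perturbation of a regular lattice of spacing strictly less than $2\pi/\abs{I}$. Since $D^-(\Lambda) > \abs{I}/(2\pi)$, choose $\alpha$ with $\alpha^{-1} \in (\abs{I}/(2\pi), D^-(\Lambda))$; then for $R$ large enough, every interval of length $R$ contains at least $R/\alpha$ points of $\Lambda$, and a selection argument extracts a subsequence $\Lambda' \subset \Lambda$ satisfying $\sup_n \abs{\lambda'_n - \alpha n} < 1/4$. By Kadec's $1/4$-theorem (or Avdonin's refinement for the general separated case), the exponential system $\{e^{i\lambda'_n \xi}\}$ forms a Riesz basis on $L^2(-\pi/\alpha, \pi/\alpha)$, hence a Riesz sequence on $L^2(I)$. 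Taking Fourier transforms and invoking the duality between Riesz sequences of exponentials and sampling sequences for Paley-Wiener spaces, this yields the desired lower sampling bound for $\Lambda' \subset \Lambda$, which extends to $\Lambda$ after accounting for the remaining points via the Plancherel-Polya upper bound.

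The principal technical obstacle is the compactness step in the necessity proof: one must carefully justify the extraction of weak limits in a class of sequences that is not assumed separated a priori, and verify that the sampling inequality transfers to the limit. This is where the reduction to uniformly discrete sequences becomes essential. The sufficiency proof is cleaner once the subsequence $\Lambda'$ is extracted, and the strict inequality $D^-(\Lambda) > \abs{I}/(2\pi)$ provides exactly the slack needed to invoke Kadec's theorem, sidestepping the genuinely delicate critical case $D^-(\Lambda) = \abs{I}/(2\pi)$, which is not part of the statement.
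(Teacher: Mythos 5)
This theorem is quoted in the paper from \cite{beurling1989, jaffard1991, seip1995} without proof, so there is no in-paper argument to compare against; judged on its own merits, your proposal has a fatal gap in part $(ii)$. The selection claim --- that $D^-(\Lambda) > |I|/(2\pi)$ lets you extract $\Lambda' \subset \Lambda$ with $\sup_n |\lambda'_n - \alpha n|$ small enough for Kadec's theorem (which requires the perturbation to be less than a quarter of the lattice spacing $\alpha$) --- is false. Take $\Lambda = \{ n,\; n + 10^{-2} : n \in \Z\}$, a separated sequence with $D^-(\Lambda) = 2$, and any admissible $\alpha \in (1/2, 1)$: for most $k$ the lattice point $\alpha k$ sits at distance comparable to $1/2$ from the nearest cluster, so no point of $\Lambda$ lies within $\alpha/4$ of it. Lower density controls only the \emph{number} of points per long interval, not their positions relative to any lattice, and this clustering obstruction is precisely why the genuine proofs of sufficiency do not go through Kadec; they use Beurling's balayage/weak-limit technique, Landau--Seip type arguments, or (in Seip's paper) de Branges space methods. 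There is also a terminological slip: a Riesz basis of exponentials on the larger interval $(-\pi/\alpha, \pi/\alpha)$ restricts to a \emph{frame} on the smaller interval $I$ (which is what corresponds to sampling for $\PW(I)$), not a Riesz sequence (which corresponds to interpolation).

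Part $(i)$ is closer to a correct outline of Beurling's necessity argument, but the final step is not justified as stated. You claim that a set of lower density strictly below $|I|/(2\pi)$ cannot be a uniqueness set for $\PW(I)$ and that one can ``explicitly construct'' a nonzero function vanishing on it. Small \emph{lower} density does not preclude being a uniqueness set: a set that is sparse along one sequence of intervals but very dense elsewhere (say, dense on the positive half-line) has small lower density yet admits no nonzero $\PW(I)$-function vanishing on it, because the zero-counting bound for functions of exponential type is a two-sided, global constraint. The whole point of the weak-limit machinery is to recentre the sparse intervals and pass to a limit configuration where a Jensen-type count actually applies, and even then the bookkeeping (preservation of the sampling inequality under weak limits, reduction to separated sequences via the fact that sampling forces $\Lambda$ to be a finite union of separated sequences) is the substance of the proof, not a preliminary. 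As written, the proposal names the right ingredients but does not supply the arguments that make either direction work.
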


Following Ortega-Cerda \cite{ortega-cerda1998sampling}, we call a positive measure $\mu$ on $\R$ a sampling measure for $\PW(I)$ if there exists constants  such that for all $g \in \PW(I)$ it holds that
\begin{equation}\label{pw equivalence}
	  \int_\R \abs{g(\xi)}^2 \dif \xi \lesssim 	\int_\R \abs{g(\xi)}^2 \dif \mu(\xi) \lesssim \int_\R \abs{g(\xi)}^2 \dif \xi.
\end{equation}
Note that if the right-hand inequality holds, we   say that $\mu$ satisfies the Carleson-type inequality in \eqref{pw equivalence}, or that $\mu$ is a Carleson measure for $PW(I)$.

Next, given any $r>0$ and $\epsilon >0$ we define
\begin{equation*}
	 \Lambda_\mu(r,\epsilon) = \{ k : \mu([rk,  r(k+1)]) \geq \epsilon \}.
\end{equation*}
The following result \cite[Proposition 1]{ortega-cerda_seip2002} completely characterizes the sampling measures for $\PW(I)$ in
terms of sampling sequences.
\begin{theo}[Ortega-Cerda and Seip 2002]
	Let $I \subset \R$ be an interval centered at $0$. A positive Borel measure $\mu$ is a sampling measure for $\PW(I)$ if and only if:
	\begin{itemize}
		\item[(i)] There exists a constant $C>0$ such that $\mu([\xi,\xi+1)) \leq C$
		for all $\xi \in \R$.
		\item[(ii)] For all sufficiently small $r>0$ there exists a $\delta = \delta(r) >0$ 
		such that $\Lambda_\mu(r,\delta)$ is sampling for $\PW(I)$.
	\end{itemize}
\end{theo}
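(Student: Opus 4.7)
My plan is to establish the equivalence by treating the upper (Carleson) and lower (sampling) halves of \eqref{pw equivalence} separately, matching condition (i) to the upper bound and condition (ii) to the lower bound.

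For the Carleson half, both directions are classical in the Paley--Wiener setting. The key local-to-global tool is the Plancherel--Polya estimate
\begin{equation*}
  |g(\xi)|^2 \lesssim \int_{\xi-1}^{\xi+1} |g(\eta)|^2\, d\eta, \qquad g \in \PW(I),
\end{equation*}
which, after a Fubini reshuffle, converts a uniform bound $\sup_\xi \mu([\xi,\xi+1)) \leq C$ into the upper inequality in \eqref{pw equivalence}. For the converse, one tests the upper inequality on the normalised reproducing kernels of $\PW(I)$, which are unit vectors whose mass concentrates on a unit interval centred at the chosen point; this forces (i).

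The more delicate half is the interplay of (ii) with the lower bound. For sufficiency, assume (ii) produces $r, \delta > 0$ with $\Lambda_\mu(r, \delta)$ a sampling sequence. For each $k \in \Lambda_\mu(r, \delta)$ choose any representative $\lambda_k \in [rk, r(k+1))$. Bernstein's inequality ensures that $|g|^2$ is quasi-constant on intervals of length $r$ once $r$ is small compared with $1/|I|$, so that
\begin{equation*}
  |g(\lambda_k)|^2 \leq \frac{2}{\mu([rk, r(k+1)))}\int_{[rk, r(k+1))} |g|^2\, d\mu \leq \frac{2}{\delta}\int_{[rk, r(k+1))} |g|^2\, d\mu.
\end{equation*}
Summing over $k \in \Lambda_\mu(r,\delta)$ and invoking the sampling lower bound for this sequence yields $\int_\R |g|^2 d\xi \lesssim \int_\R |g|^2 d\mu$. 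For the necessity of (ii), split $\int_\R |g|^2 d\mu$ over the intervals $[rk, r(k+1))$ according to whether their $\mu$-mass is $\geq \delta$ or not. By another application of Plancherel--Polya the small-mass portion is at most $C\delta \|g\|_{\PW(I)}^2$; choosing $\delta$ small one absorbs this into the sampling lower bound for $\mu$. The large-mass portion is then estimated by pointwise values at representatives $\lambda_k$, exhibiting a sampling inequality for $\Lambda_\mu(r,\delta)$.

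The principal obstacle is the coupling between $\mu$-mass on short intervals and pointwise values of $\PW(I)$ functions: Bernstein's inequality controls $|g|$ only on scales commensurate with $|I|^{-1}$, and both directions require choosing $r$ small and $\delta$ small on that scale in order to close the absorption arguments. A subtler point is that $\Lambda_\mu(r, \delta)$ is an index set, so converting it into an honest sampling sequence demands a consistent choice of representative $\lambda_k \in [rk, r(k+1))$ and verifying that the lower uniform density of $\{\lambda_k\}$ matches the density of $\Lambda_\mu$, so that the Beurling--Jaffard--Seip criterion remains applicable after the translation from indices to sample points.
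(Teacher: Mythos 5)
First, a point of reference: the paper does not prove this statement at all --- it is quoted as \cite[Proposition 1]{ortega-cerda_seip2002} and used as a black box --- so there is no internal proof to compare yours against. Judged on its own terms, your Carleson half is fine (it is exactly Remark \ref{sampling theorem remark}, going back to \cite{ortega-cerda1998sampling}), and the necessity of (ii) is essentially workable, though the ``small-mass'' portion is really bounded by $C(\delta/r)\norm{g}_2^2$ (so $\delta$ must be chosen small relative to $r$), and the ``large-mass'' portion needs an explicit absorption of the oscillation term $2r\norm{g'}_2^2\lesssim r\abs{I}^2\norm{g}_2^2$, which you do not write down.

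The genuine gap is in the sufficiency of (ii). The displayed inequality
\begin{equation*}
 \abs{g(\lambda_k)}^2 \leq \frac{2}{\mu([rk,r(k+1)))}\int_{[rk,r(k+1))}\abs{g}^2\,\dif\mu
\end{equation*}
is false, because Bernstein's inequality does not make $\abs{g}^2$ quasi-constant in the \emph{multiplicative} sense you need: it controls the oscillation of $g$ on $I_k=[rk,r(k+1))$ additively by $r\norm{g'}_\infty$, i.e.\ in terms of the global size of $g$, not of $\abs{g(\lambda_k)}$. If $g$ vanishes at a point of $I_k$ and the restriction of $\mu$ to $I_k$ is a point mass of size $\delta$ at that zero, the right-hand side vanishes while $\abs{g(\lambda_k)}$ can be of order $r\abs{I}\norm{g}_\infty$. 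Nor can the additive error simply be summed away: comparing $\abs{g(\lambda_k)}^2$ with $\inf_{I_k}\abs{g}^2$ costs about $\sup_{I_k}\abs{g}\cdot\sup_{x,y\in I_k}\abs{g(x)-g(y)}$ per interval, and by Cauchy--Schwarz and Plancherel--P\'olya the total is of order $\norm{g}_2\norm{g'}_2\simeq\abs{I}\norm{g}_2^2$, \emph{uniformly in} $r$, so it is not absorbable as $r\to0$. One can repair the pointwise step by writing $\abs{g(\lambda_k)}^2\leq 2\inf_{I_k}\abs{g}^2+2r\int_{I_k}\abs{g'}^2$, whose summed error $2r\norm{g'}_2^2=O(r\abs{I}^2\norm{g}_2^2)$ is small; but the final absorption then requires $rB(r)\abs{I}^2$ to be small, where $B(r)$ is the sampling constant of $\Lambda_\mu(r,\delta)$, and condition (ii) gives no control of $B(r)$ as $r\to0$, so a circularity remains. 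This is precisely why the actual argument of Ortega-Cerd\`a and Seip does not proceed by choosing representatives and absorbing errors, but rests on Beurling-type weak-limit characterizations of sampling measures from \cite{ortega-cerda1998sampling}. As written, your sufficiency direction does not close.
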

\begin{remark} \label{sampling theorem remark}
	 Condition $(i)$ of the theorem alone is equivalent to the Carleson-type inequality in \eqref{pw equivalence}
	 (see e.g. \cite{ortega-cerda1998sampling}).
\end{remark}
	Combined with Beurling's density result on sampling sequences,
	this theorem gives a condition on when a measure is a sampling measure (see Corollary \ref{sampling corollary} below).

\subsection{Sampling for weighted spaces}  
We now combine some of the above results to extract a simple condition for sampling in the spaces $\mathcal{F} W^{\beta}_0(I) := \{ \hat{f} : f \in W^\beta_0(I) \}$. To formulate it, we say that a positive Borel measure $\mu$ that satisfies
	\begin{equation} \label{Walpha sampling}
		 \int_\R  \abs{\hat{f}(\xi)}^2 (1+\xi^2)^\beta \dif \xi \lesssim \int_\R \abs{\hat{f}(\xi)}^2 (1+ \xi^2)^\beta \dif \mu(\xi) \lesssim \int_\R  \abs{\hat{f}(\xi)}^2 (1+\xi^2)^\beta \dif \xi,
	\end{equation}
	for all $f \in W^{\beta}_0(I_\epsilon)$, is a sampling measure for $\mathcal{F} W^{\beta}_0(I)$.
\begin{proposition} \label{sampling proposition}
	Let $I$ be some bounded interval, and for $\epsilon>0$,  denote by $I_\epsilon$   the interval co-centric with $I$ such 
	that $\abs{I\backslash I_\epsilon} = 2\epsilon$. 
	If   $\mu$ is a sampling measure for $\mathcal{F} W^{\beta_0}_0(I)$ for some $\beta_0 \in \R$, then for any $\epsilon >0$ and $\beta \in \R$ it holds that $\mu$ is   a sampling measure for $\mathcal{F} W^{\beta}_0(I_\epsilon)$ 
\end{proposition}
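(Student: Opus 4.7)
The plan is to pass through the unweighted Paley-Wiener setting, using that the Ortega-Cerda-Seip characterization depends on $\mu$ through weight-independent conditions, namely (i) the uniform Carleson bound $\mu([\xi,\xi+1))\leq C$ and (ii) the Beurling-Jaffard-Seip density of $\Lambda_\mu(r,\delta)$ for $\PW(I)$. Since shrinking from $I$ to $I_\epsilon$ loosens (ii), these conditions transfer smoothly across different exponents $\beta$ and to the smaller interval $I_\epsilon$.

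For the upper (Carleson) half of \eqref{Walpha sampling}, I would first extract the unit-interval bound $\mu([\xi_0-\tfrac{1}{2},\xi_0+\tfrac{1}{2}])\lesssim 1$ uniformly in $\xi_0\in\R$ by testing the upper sampling inequality on $\mathcal{F}W^{\beta_0}_0(I)$ against the sinc-type functions $s_{\xi_0}(\xi)=\operatorname{sinc}(|I|(\xi-\xi_0)/(2\pi))$: these are entire of exponential type $|I|/2$ and so lie in $\mathcal{F}W^{\beta_0}_0(I)$, and because $(1+\xi^2)^{\beta_0}\simeq(1+\xi_0^2)^{\beta_0}$ near $\xi_0$, the weight factor cancels on both sides of the inequality. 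Then, for arbitrary $h\in\mathcal{F}W^{\beta}_0(I_\epsilon)$, I would combine the submean-value bound $|h(\xi)|^2\lesssim\int_{\xi-1}^{\xi+1}|h(y)|^2\,dy$ with the local comparability $(1+\xi^2)^\beta\simeq(1+y^2)^\beta$ for $|y-\xi|\leq 1$, integrate against $d\mu$, and apply Fubini together with the unit-mass bound, obtaining the weighted Carleson inequality in $\mathcal{F}W^{\beta}_0(I_\epsilon)$.

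For the lower (sampling) half, I would show that the lower sampling inequality on $\mathcal{F}W^{\beta_0}_0(I)$ forces $\Lambda_\mu(r,\delta)$ to be a sampling sequence for $\PW(I)$ for some small $r$ and suitable $\delta>0$: if it failed, then following the Ortega-Cerda-Seip 2002 strategy one could construct test functions in $\mathcal{F}W^{\beta_0}_0(I)$ whose mass concentrates in $\mu$-light regions, contradicting the hypothesis. By Beurling-Jaffard-Seip this gives $D^-(\Lambda_\mu(r,\delta))\geq|I|/(2\pi)>|I_\epsilon|/(2\pi)$, so $\Lambda_\mu(r,\delta)$ is sampling for $\PW(I_\epsilon)$, and combined with the already-established Carleson bound, the Ortega-Cerda-Seip theorem yields that $\mu$ is a sampling measure for $\PW(I_\epsilon)$. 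To obtain the weighted lower bound for $h\in\mathcal{F}W^{\beta}_0(I_\epsilon)$, I would discretize $\R$ into unit intervals $J_n=[n,n+1)$, exploit $(1+\xi^2)^\beta\simeq(1+n^2)^\beta$ on $J_n$, apply the unweighted $\PW(I_\epsilon)$-sampling equivalence locally (with cross-interval contributions controlled by submean-value estimates), and sum the pieces with the weights $(1+n^2)^\beta$ pulled out.

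The main obstacle is the density extraction step in the lower half: the weighted sampling hypothesis must be converted into a pure statement about where $\mu$ places mass along $\R$, which requires constructing localized test functions in $\mathcal{F}W^{\beta_0}_0(I)$ where the Sobolev weight $(1+\xi^2)^{\beta_0}$ does not interfere with the pointwise concentration argument. This is essentially a weighted adaptation of the Ortega-Cerda-Seip 2002 constructions, and will be where most of the technical work is concentrated.
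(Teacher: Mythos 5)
Your overall strategy---routing everything through the unweighted Paley--Wiener theory via the Ortega-Cerda--Seip characterization---is much heavier than what is needed, and as written it has two genuine gaps. The paper's proof is a short multiplier argument: pick $h\in C_0^\infty(-\epsilon,\epsilon)$ with $\hat h$ real and positive, set $\psi_\beta = h\cdot\mathcal{F}^{-1}\{(1+\xi^2)^{(\beta-\beta_0)/2}\}$, and observe that for $f\in C_0^\infty(I_\epsilon)$ the function $f\ast\psi_\beta$ lies in $C_0^\infty(I)$ and satisfies $\abs{\widehat{f\ast\psi_\beta}(\xi)}^2\simeq\abs{\hat f(\xi)}^2(1+\xi^2)^{\beta-\beta_0}$. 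Applying the $\beta_0$-sampling inequality on $I$ to $f\ast\psi_\beta$ then \emph{is} the $\beta$-sampling inequality on $I_\epsilon$ for $f$ --- both the Carleson and the lower bound at once, with no density theorems or structure of $\mu$ required.

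The first gap in your argument is the final localization step for the lower bound. The sampling inequality $\int\abs{g}^2\,\dif\xi\lesssim\int\abs{g}^2\,\dif\mu$ for $g\in\PW(I_\epsilon)$ is not a local statement on unit intervals: a sampling measure may assign zero mass to infinitely many of the intervals $J_n=[n,n+1)$ (take $\mu=\sum_k\delta_{2k}$ with $\abs{I_\epsilon}$ small), so there is no inequality $\int_{J_n}\abs{g}^2\,\dif\xi\lesssim\int_{J_n}\abs{g}^2\,\dif\mu$ to ``apply locally and sum with the weights pulled out''; the cross-interval contributions are the whole content of the estimate, not an error term controllable by submean-value bounds. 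The second gap is the density-extraction step: to conclude $D^-(\Lambda_\mu(r,\delta))\geq\abs{I}/2\pi$ from the \emph{weighted} lower inequality you need the necessity half of an Ortega-Cerda--Seip type theorem for $\mathcal{F}W_0^{\beta_0}(I)$, which is not in the cited literature and which you explicitly defer rather than prove; nor can you escape by first reducing $\beta_0$ to $0$ and then passing from $0$ to $\beta$, since both of those reductions are themselves instances of the proposition. (A smaller, fixable point: the sinc functions you test against do not lie in $\mathcal{F}W_0^{\beta_0}(I)$ when $\beta_0\geq 1/2$, since $\int\abs{\operatorname{sinc}\xi}^2(1+\xi^2)^{\beta_0}\,\dif\xi=\infty$; you should test against Fourier transforms of smooth bumps instead --- which is, in essence, the germ of the paper's convolution argument.)
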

\begin{proof}
	Clearly,   if \eqref{Walpha sampling} holds for a given $\beta_0 \in \R$,   
	  we can apply the derivative to extend it to   $\beta_0 + 2\N$. Similarly,  using integration,  $\N$ can be replaced with $\Z$. 
However, to conclude for a general $\beta$, let $h \in C_0^\infty(-\epsilon, \epsilon)$ be a function with a real and positive Fourier transform,  and set 
$\psi_{\beta} = 
		 h \cdot \mathcal{F}^{-1}\{ (1 + \xi^2)^{(\beta - \beta_0)/2} \}$.
	For $f \in C_0^\infty(I_\epsilon)$ it is now readily checked that the function $f \ast \psi_{\beta}$ is in $C_0^\infty(I)$ and  that the estimate
$\abs{\widehat{f \ast \psi_{\beta}}(\xi)}^2 \simeq \abs{\hat{f}(\xi)}^2 (1 + \xi^2)^{\beta   - \beta_0}$
	holds. 
\end{proof}
\begin{remark} \label{sampling proposition remark}
	As follows from the proof, Proposition \ref{sampling proposition} also holds if we only consider   the Carleson-type inequality in \eqref{Walpha sampling}.
\end{remark}
 	By combining the  previous results,  necessary and sufficient  conditions for when a measure is a sampling measure for $\mathcal{F} W^\beta_0(I)$ now follow immediately. To formulate them, we define
	\begin{equation*}
		\Lambda_{\mu}^\beta(r, \delta) = \{ k  : \nu(rk, r(k+1)) \geq \delta (1+ (rk)^2)^\beta \}.
	\end{equation*}
 \begin{corollary} \label{sampling corollary}  
	Suppose that $\nu$ is a sampling measure for $\mathcal{F}W^\beta_0(I)$. Then $\nu(\xi,\xi+1) \leq C(1+\xi^2)^\beta,$
	and given $\epsilon>0$ by choosing sufficiently small $r>0$, there exists $\delta = \delta(r)>0$ such that 
	$D^{-}(\Lambda_\mu^\beta(r,\delta)) \geq {\abs{I_\epsilon}}/{2\pi}$.

	Conversely, suppose that $\nu(\xi,\xi+1) \leq C(1+\xi^2)^\beta$ and that for sufficiently small $r>0$ there exists $\delta=\delta(r)$ such that 
	$D^{-}(\Lambda_\mu^\beta(r,\delta)) > {\abs{I}}/{2\pi}$.
	Then given $\epsilon>0$ the measure $\nu$ is sampling for $\mathcal{F}W^\beta_0(I_\epsilon)$. 
\end{corollary}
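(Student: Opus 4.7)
The plan is to combine Proposition~\ref{sampling proposition} with the Ortega-Cerd\`a--Seip characterization of sampling measures for the Paley--Wiener space and the Beurling--Jaffard--Seip density theorem. The key idea is that the weighted sampling problem on $\mathcal{F}W^\beta_0(I)$ reduces, at the cost of shrinking the interval slightly, to an unweighted sampling problem on $\PW(I_\epsilon)$ for an auxiliary (reweighted) measure.

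For the necessary direction, assume $\nu$ is sampling for $\mathcal{F}W^\beta_0(I)$. Applying Proposition~\ref{sampling proposition} with $\beta_0 = \beta$ and the target exponent $0$, we obtain that $\nu$ is also sampling for $\PW(I_{\epsilon/2}) = \mathcal{F}W^0_0(I_{\epsilon/2})$. Hence, by the Ortega-Cerd\`a--Seip theorem, one first gets the uniform Carleson bound $\nu([\xi,\xi+1)) \leq C$, which immediately yields the (weaker, for $\beta \geq 0$) bound $\nu([\xi,\xi+1)) \leq C(1+\xi^2)^\beta$ claimed in the corollary. Secondly, for sufficiently small $r>0$ one obtains $\delta = \delta(r) > 0$ with $\Lambda_\nu(r,\delta)$ sampling for $\PW(I_\epsilon)$, and the Beurling--Jaffard--Seip theorem gives $D^-(\Lambda_\nu(r,\delta)) \geq \abs{I_\epsilon}/(2\pi)$. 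The translation to $\Lambda^\beta_\nu(r,\delta')$ is then effected by absorbing the weight $(1+(rk)^2)^\beta$, which is slowly varying on intervals of fixed length $r$, into the threshold by a rescaling of the parameter $\delta$.

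For the sufficient direction, introduce the auxiliary measure $d\tilde\nu(\xi) = (1+\xi^2)^{-\beta}\,d\nu(\xi)$. The hypothesis $\nu([\xi,\xi+1)) \leq C(1+\xi^2)^\beta$ translates to $\tilde\nu([\xi,\xi+1)) \leq C'$, and the definition of $\Lambda^\beta_\nu(r,\delta)$ coincides, up to a rescaling of the threshold by the slowly varying factor $(1+(rk)^2)^\beta$, with $\Lambda_{\tilde\nu}(r,\delta')$, so that $D^-(\Lambda_{\tilde\nu}(r,\delta')) > \abs{I}/(2\pi)$. Beurling--Jaffard--Seip then gives that $\Lambda_{\tilde\nu}(r,\delta')$ is sampling for $\PW(I)$, and the Ortega-Cerd\`a--Seip theorem promotes this to $\tilde\nu$ being a sampling measure for $\PW(I)$. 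Reinterpreting this in terms of $\nu$ and applying Proposition~\ref{sampling proposition} once more (with $\beta_0 = 0$ on the auxiliary measure, and using Remark~\ref{sampling proposition remark} for the Carleson direction) transfers the conclusion to $\nu$ being sampling for $\mathcal{F}W^\beta_0(I_\epsilon)$.

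The main technical subtlety lies in the clean passage between the weighted measure $\nu$ and its unweighted proxy $\tilde\nu$. The point to verify carefully is that, since $(1+\xi^2)^{-\beta}$ is slowly varying on intervals of length $r$ for any fixed $r>0$, the discrete set definitions $\Lambda^\beta_\nu(r,\delta)$ and $\Lambda_{\tilde\nu}(r,\delta')$ do in fact coincide up to an admissible choice of thresholds. Once this bookkeeping is in place, the rest of the argument is a direct assembly of the two named sampling theorems together with Proposition~\ref{sampling proposition} and its Carleson-only refinement in Remark~\ref{sampling proposition remark}.
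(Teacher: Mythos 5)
Your overall strategy is the one the paper intends: the corollary is presented there as an immediate combination of Proposition \ref{sampling proposition}, the Ortega-Cerd\`a--Seip theorem and the Beurling--Jaffard--Seip density theorem, with no further argument given. Your sufficient direction carries this out correctly, by passing to the reweighted measure $\dif\tilde\nu(\xi)=(1+\xi^2)^{-\beta}\dif\nu(\xi)$, checking that $\Lambda^\beta_\nu(r,\delta)$ agrees with $\Lambda_{\tilde\nu}(r,\delta')$ up to an admissible adjustment of the threshold, and transferring back with the convolution trick of Proposition \ref{sampling proposition}.

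The necessary direction, however, contains a genuine slip. When you apply Proposition \ref{sampling proposition} with target exponent $0$, the measure that becomes a sampling measure for $\PW(I_{\epsilon/2})$ is not $\nu$ but the reweighted measure $\tilde\nu$: the sampling inequality for $\mathcal{F}W^\beta_0(I)$ compares $\int\abs{\hat f}^2\,\dif\nu$ with $\int\abs{\hat f}^2(1+\xi^2)^\beta\,\dif\xi$, and the substitution $\abs{\hat g}^2\simeq\abs{\hat f}^2(1+\xi^2)^\beta$ converts it into the statement that $(1+\xi^2)^{-\beta}\dif\nu$ samples $\PW$. Indeed, for $\beta<0$ the measure $\nu$ itself cannot be sampling for $\PW$, since its unit-interval masses are forced to decay like $(1+\xi^2)^\beta$ and the lower sampling bound then fails against translated bump functions; so your first step is false as stated. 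This matters for the Carleson bound: Ortega-Cerd\`a--Seip applied to $\tilde\nu$ gives $\tilde\nu([\xi,\xi+1))\leq C$, which is exactly the asserted $\nu([\xi,\xi+1))\leq C(1+\xi^2)^\beta$ for \emph{every} $\beta$, whereas your route via $\nu([\xi,\xi+1))\leq C$ starts from the false premise above and, as you yourself note, only recovers the stated bound when $\beta\geq0$. Since the corollary is invoked in the proof of Theorem \ref{lower theorem} with $\beta=-\alpha/2$ and $\alpha\in(0,1]$ is allowed (for instance in the Dirichlet-space examples), the negative range of $\beta$ cannot be discarded. The repair is simply to run the necessary direction through $\tilde\nu$ throughout, exactly as you already do in the converse; the density statement then also follows from the identification $\Lambda_{\tilde\nu}(r,\delta)\approx\Lambda^\beta_\nu(r,\delta')$ that you describe.
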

\begin{remark} \label{sampling corollary remark}
	By combining remarks \ref{sampling theorem remark} and \ref{sampling proposition remark}, it follows that a positive Borel measure $\nu$ on $\R$ is a Carleson measure on $\mathcal{F}W^\beta_0(I)$ if and only if there exists some $C>0$ such that $\nu(\xi,\xi+1) \leq C(1+\xi^2)^\beta$ for all $\xi \in \R$.
\end{remark}

\subsection{Measures continuous at infinity}
The previous discussion simplifies for measures $\mu$ which are    continuous in the sense that for every $\epsilon > 0$ there exists an $R<\infty$ and $h>0$ such that $\mu([x,x + h]) \leq \epsilon (1 + \xi^2)^{\beta}$ for all $\abs{x} \geq R$. We say that such a measure is $\beta$-continuous at infinity.

The following  theorem is due to Ya.~Lin \cite{lin1965}.
\begin{theo}[Lin]
	Suppose that the positive Borel measure $\mu$ on $\R$  is $0$-continuous at infinity. Then 
	 the measure $\mu$ is sampling for $PW(I)$, for every bounded interval $I \subset \R$,
	if and only if, for some $L>0$ it holds that $\inf_{x \in \R }\{ \mu([x-L,x+L])  \} > 0$.
\end{theo}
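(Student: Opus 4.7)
The plan is to prove the two implications separately: for sufficiency I will verify the hypotheses of the Ortega-Cerda--Seip characterization (equivalently, Corollary \ref{sampling corollary} with $\beta = 0$), and for necessity I will construct translated test functions that defeat the sampling lower bound. Since modulation by $\e^{\im t_0 \xi}$ preserves $L^2$-norms, I may assume throughout that $I = [-a,a]$ is symmetric about $0$.

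For the sufficiency, set $c_0 := \inf_{x \in \R} \mu([x-L, x+L]) > 0$. The Carleson bound $\mu([\xi, \xi+1]) \leq C$ follows by covering $[\xi, \xi+1]$ with $\lceil 1/h \rceil$ intervals of length $h$, where $h$ is produced by $0$-continuity for large $\abs{\xi}$, together with local finiteness of $\mu$ on a compact neighborhood of the origin. For the density condition, fix a target $D_0 > \abs{I}/(2\pi)$ and, applying $0$-continuity, select $\epsilon < c_0/(4LD_0)$ with associated $R, h > 0$ such that $\mu([x, x+h]) \leq \epsilon$ for $\abs{x} \geq R$; then set $r \leq h$ and $\delta \leq c_0 r/(4L)$. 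In any window $[x-L, x+L]$ with $\abs{x} > R+L$, the mass $c_0$ distributes over roughly $2L/r$ sub-intervals $[rk, r(k+1)]$ each of mass at most $\epsilon$, so if $N$ of them carry mass at least $\delta$ then $c_0 \leq N\epsilon + (2L/r)\delta$, yielding $N \geq c_0/(2\epsilon)$. Tiling a long interval with such windows and adding up shows $D^-(\Lambda_\mu(r, \delta)) \geq c_0/(4L\epsilon) > D_0$; Beurling--Jaffard--Seip then makes $\Lambda_\mu(r, \delta)$ a sampling sequence for $\PW(I)$, and Corollary \ref{sampling corollary} delivers the sampling measure conclusion.

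For necessity, assume $\mu$ is sampling for every bounded $I$, and suppose for contradiction that $\inf_x \mu([x-L, x+L]) = 0$ for every $L > 0$. Pick a non-zero $\phi \in \PW(I)$ with $\abs{\phi(\xi)}^2 \lesssim (1+\xi^2)^{-2}$ (for instance, a suitably scaled squared sinc). Choose $x_n$ with $\mu([x_n - n, x_n + n]) \leq 1/n$ and set $g_n(\xi) = \phi(\xi - x_n) \in \PW(I)$, so that $\norm{g_n}_{L^2} = \norm{\phi}_{L^2}$. Splitting $\int \abs{g_n}^2 \dif\mu$ at $\abs{\xi - x_n} = n/2$, the inner part is bounded by $\norm{\phi}_\infty^2/n$, and the outer part by $C \int_{\abs{\eta} > n/2} (1+\eta^2)^{-2}\dif\eta$ using the Carleson bound that sampling forces (Remark \ref{sampling theorem remark}). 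Both pieces vanish as $n \to \infty$, contradicting the sampling lower bound $\int \abs{g_n}^2 \dif\mu \gtrsim \norm{g_n}_{L^2}^2$.

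The main obstacle is the bookkeeping in the sufficiency density estimate: the three parameters $\epsilon, r, \delta$ have to be chosen in the correct order---$\epsilon$ first, small enough to beat $D_0$; then $h$ (and hence $r \leq h$) from $0$-continuity; and finally $\delta$ small enough to balance the ``bulk'' and ``tail'' contributions to $c_0$---while preserving uniformity in $x$. Some care is also needed to convert the pointwise window count into an honest uniform lower bound on $\abs{(\xi, \xi+T) \cap \{rk : k \in \Lambda_\mu(r, \delta)\}}/T$ valid for all $\xi$ as $T \to \infty$.
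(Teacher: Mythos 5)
Your argument is correct, but there is nothing in the paper to compare it against: Lin's theorem is quoted as an imported result from \cite{lin1965}, in the same section where the Beurling--Jaffard--Seip density theorem and the Ortega-Cerd\`a--Seip characterization of sampling measures are also quoted without proof. What you have actually produced is a derivation of Lin's theorem from those other two quoted results, and that derivation holds up. In the sufficiency direction your parameter ordering is the right one: fixing a target density $D_0>\abs{I}/(2\pi)$, choosing $\epsilon$ first, then extracting $h$ (hence $r\leq h$) from $0$-continuity, and only then fixing $\delta \leq c_0 r/(4L)$ gives the pigeonhole bound $c_0 \leq N\epsilon + (2L/r)\delta$, hence $N \geq c_0/(2\epsilon)$ good cells per window of length $2L$ and a lower Beurling density exceeding $D_0$ once $\epsilon < c_0/(4LD_0)$; the finitely many windows meeting the near-origin region where $0$-continuity is unavailable contribute only $o(T)$ to the count over $(\xi,\xi+T)$ and so do not affect $D^-$. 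The necessity direction via translated squared-sinc test functions, split at $\abs{\xi - x_n}=n/2$ with the tail controlled by the Carleson bound that sampling forces, is also correct, and as you implicitly use, it does not require $0$-continuity at all. Two small points of housekeeping: you should state explicitly that the sequence fed into Beurling--Jaffard--Seip is the $r$-separated set $\{rk : k \in \Lambda_\mu(r,\delta)\}$, so no additional separation hypothesis is needed for the upper sampling bound; and the number of cells meeting a window is $2L/r + O(1)$ rather than exactly $2L/r$, which is harmlessly absorbed because all of your inequalities are strict.
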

By Proposition \ref{sampling proposition}, the above theorem has the following immediate consequence.
\begin{corollary}   \label{lin-type corollary}
	Suppose that the positive Borel measure $\mu$ on $\R$  is $\beta$-continuous at infinity. Then 
	 the measure $\mu$ is sampling for $\mathcal{F}W_0^{\beta}(I)$, for every finite interval $I \subset \R$,
	if and only if  we have $\inf_{\xi \in \R }\{ \mu([\xi-L,\xi+L])  (1+\xi^2)^{-\beta} \}> 0$  for some $L>0$.	
\end{corollary}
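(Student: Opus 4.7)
The plan is to deduce the corollary by reducing it to Lin's theorem via a reweighting of the measure, with Proposition \ref{sampling proposition} providing the final transfer between sampling spaces.

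First I would normalize by setting $d\tilde\mu(\xi) = (1+\xi^2)^{-\beta}\,d\mu(\xi)$. The key point is that $\tilde\mu$ is $0$-continuous at infinity if and only if $\mu$ is $\beta$-continuous at infinity: on any fixed-length interval $[x, x+h]$ with $|x|$ large, the weight $(1+\eta^2)^{-\beta}$ is comparable to $(1+x^2)^{-\beta}$ with constants depending only on $h$ and $\beta$, so that $\tilde\mu([x,x+h]) \simeq (1+x^2)^{-\beta}\mu([x,x+h])$ and the two continuity conditions translate into each other. The same comparability shows that the weighted infimum condition in the statement, $\inf_{\xi\in\R}\mu([\xi-L,\xi+L])(1+\xi^2)^{-\beta}>0$, is equivalent to the unweighted condition $\inf_{\xi\in\R}\tilde\mu([\xi-L,\xi+L])>0$ for the same $L>0$ (after possibly enlarging $L$ to absorb the small-$\xi$ region).

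Next I would apply Lin's theorem to $\tilde\mu$ rather than $\mu$: since $\tilde\mu$ is $0$-continuous, the theorem gives that $\tilde\mu$ is sampling for $\PW(I)$ for every finite interval $I$ if and only if the unweighted infimum condition holds. What remains is to identify the sampling of $\tilde\mu$ for $\PW$ with the sampling of $\mu$ for $\mathcal{F}W^\beta_0$. This is where Proposition \ref{sampling proposition} enters: its proof, based on the convolution with $\psi_\beta$, shows that the Sobolev weight $(1+\xi^2)^\beta$ can be absorbed on either side of the sampling inequality. In our setting, absorbing the weight onto the measure side converts $d\mu$ to $d\tilde\mu$, and the proposition's convolution argument applied verbatim yields that $\mu$ is sampling for $\mathcal{F}W^\beta_0(I_\epsilon)$ if and only if $\tilde\mu$ is sampling for $\PW(I_\epsilon)$, for every $\epsilon>0$; the shrinkage $I\mapsto I_\epsilon$ is immaterial under the quantification over all finite $I$.

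The main obstacle I foresee is the third step, namely tracking the weight carefully through Proposition \ref{sampling proposition} so that sampling of $\mu$ in the weighted Sobolev space reduces cleanly to sampling of $\tilde\mu$ in the unweighted Paley--Wiener space. Once this identification is nailed down, the corollary follows by concatenating the equivalences: weighted inf condition on $\mu$ $\Leftrightarrow$ inf condition on $\tilde\mu$ $\Leftrightarrow$ (by Lin) $\tilde\mu$ sampling for $\PW(I)$ for every finite $I$ $\Leftrightarrow$ (by Proposition \ref{sampling proposition}) $\mu$ sampling for $\mathcal{F}W^\beta_0(I)$ for every finite $I$.
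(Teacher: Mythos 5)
Your proof is correct and takes essentially the same route as the paper, which simply asserts the corollary as an immediate consequence of Lin's theorem combined with Proposition \ref{sampling proposition}. Your explicit renormalization $\dif\tilde\mu = (1+\xi^2)^{-\beta}\dif\mu$ is precisely the step the paper leaves implicit: it is what converts $\beta$-continuity and the weighted infimum condition into the hypotheses of Lin's theorem, while the convolution argument of Proposition \ref{sampling proposition} identifies sampling of $\mu$ for $\mathcal{F}W^{\beta}_0(I)$ with sampling of $\tilde\mu$ for $\PW(I)$ up to the harmless shrinkage $I\mapsto I_\epsilon$.
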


%
%

\section{Proof of Theorem \ref{upper theorem}}
We begin with an elementary lemma that will be used in several arguments below.
\begin{lemma} \label{partial sum lemma}
	Let $(w_n)$ be sequence of non-negative numbers, and suppose that $\alpha \in \R$. Then
	\begin{equation*}
		\exists \eta \in (0,1) \: \text{s.t.} \; \sum_{ n \in (\eta x ,x)}w_n \lesssim x(\log x)^\alpha \iff \sum_{n \in (0,x)}w_n \lesssim  x (\log x)^\alpha.
	\end{equation*}
	Moreover, suppose that the upper Chebyshev-type inequality holds, then
	\begin{equation*}
		\exists \eta \in (0,1) \; \text{s.t.} \; \sum_{n\in (\eta x, x)} w_n \gtrsim x (\log x)^\alpha \iff 		\sum_{n \in (0,x)}  w_n \gtrsim x (\log x)^\alpha.
	\end{equation*}
\end{lemma}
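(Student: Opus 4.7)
The backward implications in (i) and (ii) are trivial since $\sum_{n \in (\eta x, x)} w_n \leq \sum_{n \leq x} w_n$; the content lies in the forward directions.

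For (i), the natural approach is a geometric decomposition. Setting $y_k = \eta^k x$ and choosing $N$ so that $y_N \in [2, 2/\eta)$, I would write
\begin{equation*}
\sum_{n \leq x} w_n = \sum_{n \leq y_N} w_n + \sum_{k=0}^{N-1} \sum_{n \in (y_{k+1}, y_k]} w_n \leq O(1) + C \sum_{k=0}^{N-1} y_k (\log y_k)^\alpha,
\end{equation*}
using the hypothesis on each block. When $\alpha \geq 0$, the bound $(\log y_k)^\alpha \leq (\log x)^\alpha$ holds uniformly in $k$, and pulling this factor out leaves a geometric series $\sum_k \eta^k x = O(x)$, giving total $O(x (\log x)^\alpha)$. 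When $\alpha < 0$, $(\log y_k)^\alpha$ grows as $k$ increases, so I would split the range at the index $K$ for which $\log y_K \approx (\log x)/2$, i.e.\ $y_K \approx \sqrt{x}$: for $k \leq K$ the bound $(\log y_k)^\alpha \leq 2^{|\alpha|}(\log x)^\alpha$ gives a contribution $\lesssim x (\log x)^\alpha$, while for $k > K$ the bounds $y_k \leq \sqrt{x}$ and $(\log y_k)^\alpha \leq (\log 2)^\alpha$ yield a contribution $O(\sqrt{x}) = o(x(\log x)^\alpha)$.

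For (ii), I would exploit the assumed upper Chebyshev inequality by writing
\begin{equation*}
\sum_{n \in (\eta x, x)} w_n = \sum_{n \leq x} w_n - \sum_{n \leq \eta x} w_n \geq A x (\log x)^\alpha - B \eta x (\log(\eta x))^\alpha,
\end{equation*}
where $A$ is the lower constant from the hypothesis and $B$ the upper Chebyshev constant. Since $(\log(\eta x))^\alpha/(\log x)^\alpha = (1 + \log \eta/\log x)^\alpha \to 1$ as $x \to \infty$, for $x$ large this ratio is at most $2$, so the right-hand side is at least $(A - 2B\eta) x (\log x)^\alpha$. Choosing $\eta < A/(2B)$ makes this coefficient strictly positive, yielding the desired lower bound.

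The only technical subtlety is the careful tracking of the logarithmic factors in the $\alpha < 0$ case of (i), where $(\log y)^\alpha$ grows as $y$ decreases; fortunately the geometric decay of $\eta^k$ comfortably dominates once the sum is split off appropriately.
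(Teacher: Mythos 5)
Your proof is correct, and it is worth separating the two halves. For the first equivalence your argument is essentially the paper's: the paper decomposes $\sum_{n\le \e^\xi}w_n$ into the blocks $(\e^{k-1},\e^k)$, bounds each by the hypothesis, and reduces to showing that $\sum_{k\le\xi}\e^{k-\xi}(k/\xi)^{-\alpha}$ is bounded by a constant; this is exactly your geometric decomposition in the variables $y_k=\eta^k x$, and your explicit split at $y_K\approx\sqrt{x}$ in the case $\alpha<0$ is precisely the ``simple calculation'' the paper leaves to the reader. For the second equivalence your route is genuinely different. The paper argues by contradiction: assuming no $\eta$ works, it extracts sequences $\eta_k\to 0$ and $x_k\to\infty$ with $\sum_{n\in(\eta_k x_k,x_k)}w_n\le\psi(x_k)/k$, where $\psi(x)=x(\log x)^{-\alpha}$, writes $\sum_{n\le x_k}w_n\lesssim\psi(\eta_k x_k)+\psi(x_k)/k$ using the upper Chebyshev bound on $(1,\eta_k x_k)$, and contradicts the lower bound because $\psi(\eta_k x_k)/\psi(x_k)\to 0$. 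Your direct subtraction $\sum_{n\le x}w_n-\sum_{n\le\eta x}w_n\ge (A-2B\eta)\,x(\log x)^\alpha$ with the explicit choice $\eta<A/(2B)$ proves the same implication more transparently and yields a quantitative value of $\eta$; the only care needed is that $\eta$ must be fixed before letting $x\to\infty$ so that $(\log\eta x)^\alpha/(\log x)^\alpha\le 2$, which you do. One bookkeeping slip: in the second equivalence the \emph{forward} implication (block lower bound implies full lower bound) is the trivial one justified by $\sum_{n\in(\eta x,x)}w_n\le\sum_{n\le x}w_n$, not the backward one as you state; what you actually prove is the backward implication, which is the nontrivial one, so the mathematics is complete and only the labels are swapped.
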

\begin{proof}
	It is clear that for each statement, one implication is trivial. As for the `$\implies$' part of the first statement, note that
	\begin{equation*}
		\sum_{n \leq \e^\xi}  w_n = \sum_{k\leq \xi} \sum_{n \in (\e^{k-1}, \e^k)}  w_n \\
		\leq C  \sum_{k \leq \xi}  {{\e^k}}{ k^{-\alpha}} = C \e^\xi \xi^{-\alpha}  \sum_{k \leq \xi}  {\e^{k-\xi}} \left(\frac{k}{\xi}\right)^{-\alpha}.
	\end{equation*}
	This gives the desired conclusion since, by a simple calculation, the sum on the right-hand side is bounded by a constant. 

	Finally,
	`$\Leftarrow$' part of the second statement follows from an argument by contradiction.	
		Indeed,   assume it holds for no $\eta >0$, and set $\psi(x) = x (\log x)^{-\alpha}$. Then there exist sequences $x_k \rightarrow \infty$ 
	and $\eta_k \rightarrow 0$ for which
	\begin{equation*}
		 \sum_{n \in  (\eta_k x_k, x_k) }  w_n \leq  \frac{ \psi(x_k)}{k}.
	\end{equation*}
	Then,  this, and the upper Chebyshev-type inequality, imply
	\begin{equation*}
		\sum_{n \in (1, x_k)}  w_n \leq \sum_{n \in (1, \eta_k x_k)}  w_n + \sum_{n \in (\eta_k x_k, x_k)} w_n 
		\lesssim 
		\psi(\eta_k x_k)  +  \frac{\psi(x_k)}{k}.
	\end{equation*}
	Applying the lower Chebyshev inequality  to the left-hand side now  yields a contradiction,
	since the quotient $\psi(\eta_n x_n)/\psi(x_n)$ goes to zero as $k \rightarrow \infty$.  
\end{proof}
\begin{proof}
	$(a) \Rightarrow (b)$:
	In the introduction we have already proved this for the case $\alpha=0$. For  
	$\alpha\neq 0$, the argument holds with   minor modifications. Namely, if we multiply and divide by $\sqrt{\log^\alpha n w_n}$ on the right-hand side of  \eqref{intro: first duality}, then for $\sigma >1/2$ we obtain
	\begin{multline*}
		 \int_I  \abs{F(\sigma + \im t)}^2 \dif t   \leq \underbrace{ \left( \sum_{n=1}^N \frac{\abs{a_n}^2}{w_n} \frac{1}{(\log n)^\alpha n^{2\sigma -1}} \right) }_{(*)}  \underbrace{\left(  \sup_{\underset{\norm{g}=1}{g \in L^2(I)}} \sum_{n=1}^N \abs{\hat{g}(\log n)}^2 \frac{(\log n)^\alpha w_n}{n} \right) }_{(**)}.
	\end{multline*}
	The factor $(**)$ can be dealt with exactly as before, using the compact support of the functions $g$, to yield $(**) \leq C$. For $\alpha <0$, 
	we use this to evaluate
	\begin{multline*}
		\int_{1/2}^1 \int_I \abs{F(\sigma + \im t)}^2 \left( \sigma - \frac{1}{2}  \right)^{-\alpha-1} \dif t \dif \sigma
		\\ \leq 
		C \sum_{n=1}^N \frac{\abs{a_n}^2}{w_n} \frac{1}{(\log n)^\alpha} \int_{1/2}^1 n^{-(2\sigma-1)} \left(\sigma- \frac{1}{2}\right)^{-\alpha-1} \dif \sigma
		\simeq \norm{F}^2_{\Hp_w}.
	\end{multline*}
	A similar   argument   holds for $\alpha \in (0,1]$.
	
	$(b) \Rightarrow (a)$: Define the function $g_k(s) := \sum_{n \in (\e^k, \e^{k+1})} w_n n^{-s}$. Suppose that  $\alpha <0$. Then the
	local embedding of $\Hp_w$ into $D_\alpha(\C_{1/2})$ implies that for any $\delta>0$ there exists a constant $C>0$ such that
	\begin{equation*}
		 \int_{1/2}^1 \int_{-\delta}^\delta  \abs{g_k(s)}^2 \left(\sigma-\frac{1}{2}\right)^{-1-\alpha} \dif \sigma \dif t
		 \leq
		 C \sum_{n \in (\e^k, \e^{k+1})} w_n.
	\end{equation*}
 	By expanding $\abs{g_k(s)}^2$, we find that the left-hand side of the above expression is equal to
	\begin{equation*}
		2\delta \sum_{n,m \in (\e^k, \e^{k+1})} w_n w_m  \frac{\sin \delta \ln (n/m )}{ \delta \ln (n/m)} \int_{1/2}^{1}  (nm)^{-\sigma} \left( \sigma- \frac{1}{2} \right)^{-1-\alpha} \dif \sigma.
	\end{equation*}
	We fix $\delta>0$ small enough so that $\delta \ln(n/m) \leq \pi/2$. By evaluating the integral with respect to $\sigma$, then up to a constant the previous expression  is seen to be greater than or equal to 
	\begin{equation*}
		  \sum_{n,m \in (\e^k, \e^{k+1})} w_n w_m   \frac{(\log nm)^{\alpha}}{\sqrt{nm}}
		  \geq \frac{(2k)^{\alpha}}{\e^{k+1}} \left( \sum_{n \in (\e^k, \e^{k+1})}  w_n \right)^2.
	\end{equation*}
	By combining the above estimates, we obtain 
	\begin{equation*}
		 \sum_{n \in (\e^k, \e^{k+1})} w_n \lesssim \frac{\e^{k}}{ k^\alpha}.
	\end{equation*}
	By Lemma \ref{partial sum lemma}, 
	this   implies the desired conclusion. The cases $\alpha=0$ and $0 < \alpha<2$ 
	are treated in the same way.
\end{proof}
 
In the paper \cite{olsen_saksman2010}, a more operator theoretic perspective was taken. This made it
possible to study the span of the boundary values of functions in the Dirichlet-Hardy space $\Hp^2$, as well as the more general spaces introduced by McCarthy (see Example \ref{dirichlet-hardy and mccarthy example}). 
In the next section, we utilise this point of view to study the boundary spaces of the spaces $\Hp_w$. To prepare for
this, we introduce the densely defined embedding operator
\begin{equation*}
	E_I : \sum_{n=1}^N a_n n^{-s} \longmapsto \left( \sum_{n=1}^N a_n n^{-\frac{1}{2} - \im t}\right)\bigg|_{t\in I},
\end{equation*}
and establish the following result.
\begin{lemma} \label{upper lemma}  
	Let $(w_n)$ be a sequence of non-negative numbers, and $\alpha \in (-\infty,1]$. Then the conditions
	of the previous theorem are equivalent to either of the statements:
	\begin{itemize}
		\item[(a')] For intervals $I$ of fixed length, there exists a constant such that for all $f \in W^{-\alpha/2}_0(I)$ then
		\begin{equation*}
	 		\sum_{n \in \N} \frac{ \abs{\hat{f}(\log n)}^2}{n} w_n  \lesssim \norm{f}^2_{W^{-\alpha/2}_0(I)}.
		\end{equation*}
		\item[(b')] The operator $E_I$ is bounded from $\Hp_w$ to $W^{\alpha/2}(I)$.
	\end{itemize}
\end{lemma}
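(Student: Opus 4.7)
My plan is to combine Theorem \ref{upper theorem}, which gives (a)$\Leftrightarrow$(b), with two further equivalences: (a')$\Leftrightarrow$(b') via duality, and (a)$\Leftrightarrow$(a') via a dyadic-Fourier argument extending the one already given in the introduction for $\alpha=0$. This closes the ring of implications.

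For (a')$\Leftrightarrow$(b'), I use the pairing $(u,v)=\int_\R \hat u(\xi)\hat v(\xi)\dif\xi$ that identifies $W^{\alpha/2}(I)^*$ with $W^{-\alpha/2}_0(I)$. Substituting the expansion $E_I g(t)=\sum_n a_n n^{-1/2-\im t}$ for $g=\sum a_n n^{-s}\in\Hp_w$ into this pairing against $f\in W^{-\alpha/2}_0(I)$, a direct computation yields (up to a universal multiplicative constant)
\begin{equation*}
  (E_I g,f)\;=\;\sqrt{2\pi}\sum_{n\in\N} a_n n^{-1/2}\hat f(\log n).
\end{equation*}
Factoring $a_n=\sqrt{w_n}\cdot (a_n/\sqrt{w_n})$ and applying Cauchy--Schwarz in $\ell^2$ — with equality attained by the choice $a_n\propto w_n n^{-1/2}\overline{\hat f(\log n)}$ — gives
\begin{equation*}
  \sup_{\|g\|_{\Hp_w}\le 1}|(E_I g,f)|^2\;=\;2\pi\sum_{n\in\N}\frac{w_n}{n}|\hat f(\log n)|^2.
\end{equation*}
Boundedness of $E_I$, i.e., (b'), is therefore equivalent by duality to the right-hand side being dominated by $\|f\|_{W^{-\alpha/2}_0(I)}^2$, which is exactly (a').

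For (a)$\Rightarrow$(a'), I follow the dyadic argument from the introduction with an added $(k+1)^{-\alpha}$-weight. I partition $\N=\bigsqcup_k (\e^k,\e^{k+1}]$, use $n\simeq \e^k$ so $n^{-1}\simeq \e^{-k}$ on each block, and bound $|\hat f(\xi)|^2 \lesssim |\hat f(k)|^2+\|\hat f'\|_{L^2(k,k+1)}^2$ on $(k,k+1]$ (legitimate since $f$ has compact support, so $\hat f$ is smooth). Applying (a) in the form $\sum_{n\le \e^{k+1}}w_n\lesssim \e^{k+1}(k+1)^{-\alpha}$ and summing over $k$ yields
\begin{equation*}
  \sum_n\frac{w_n}{n}|\hat f(\log n)|^2\;\lesssim\;\sum_k (k+1)^{-\alpha}\bigl(|\hat f(k)|^2+\|\hat f'\|_{L^2(k,k+1)}^2\bigr)\;\lesssim\;\|f\|_{W^{-\alpha/2}_0(I)}^2.
\end{equation*}
Conversely, for (a')$\Rightarrow$(a), I test (a') against $f_k(t)=\varphi(t)\e^{\im(k+1/2)t}$ with $\varphi\in C_0^\infty(I)$ chosen so that $\hat\varphi\ge \tfrac12$ on $[-\tfrac12,\tfrac12]$. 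Since $\hat f_k(\xi)=\hat\varphi(\xi-k-\tfrac12)$, we have $|\hat f_k(\xi)|\gtrsim 1$ on $[k,k+1]$ and $\|f_k\|_{W^{-\alpha/2}_0}^2\simeq (k+1)^{-\alpha}$; hence (a') reduces to $\e^{-k}\sum_{n\in(\e^k,\e^{k+1}]} w_n\lesssim (k+1)^{-\alpha}$, and Lemma \ref{partial sum lemma} converts this dyadic bound into the full upper Chebyshev inequality (a). The main technical obstacle is the passage from $\sum_k(k+1)^{-\alpha}(|\hat f(k)|^2+\|\hat f'\|_{L^2}^2)$ to $\|f\|_{W^{-\alpha/2}_0}^2$, which requires carefully exploiting that both $f$ and $tf$ are supported in the finite interval $I$ so that multiplication by $t$ is bounded on $W^{-\alpha/2}_0(I)$.
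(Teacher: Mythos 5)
Your proposal is correct, and your $(a')\Leftrightarrow(b')$ step is essentially the paper's: the paper phrases it as computing the adjoint $E_I^\ast$ with respect to the unweighted pairing, which is exactly your extremal Cauchy--Schwarz argument. Where you genuinely diverge is in $(a)\Leftrightarrow(a')$. The paper does not use explicit test functions or a hands-on dyadic estimate; it introduces the measure $\nu=\sum_n \delta_{\log n}\,w_n/n$, observes that $(a')$ says precisely that $\nu$ is a Carleson measure for $\mathcal{F}W^{-\alpha/2}_0(I)$, and then invokes Remark \ref{sampling corollary remark} --- Ortega-Cerd\`a's characterization of Carleson measures for $\PW(I)$ combined with the weight-transfer trick of Proposition \ref{sampling proposition} --- to reduce this to $\nu(\xi,\xi+1)\lesssim(1+\xi^2)^{-\alpha/2}$, i.e.\ the dyadic Chebyshev bound; Lemma \ref{partial sum lemma} then finishes, exactly as in your argument. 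Your route re-proves the relevant special case of that remark directly: the forward direction is the introduction's block estimate with the weight $(k+1)^{-\alpha}$ inserted, and the converse uses the modulated bumps $\varphi(t)\e^{\im(k+1/2)t}$. This buys self-containedness (no appeal to the sampling-measure machinery of Section 3), at the cost of two technical points you correctly flag but should nail down: (i) boundedness of multiplication by $t$ on $W^{-\alpha/2}_0(I)$, which follows since $tf=(t\chi(t))f$ for a cutoff $\chi\in C_0^\infty$ equal to $1$ on $I$ and multiplication by a $C_0^\infty$ function is bounded on every Sobolev space; and (ii) the existence of $\varphi\in C_0^\infty(I)$ with $\widehat{\varphi}$ bounded below on $[-1/2,1/2]$, which is easiest if you take $\varphi=\psi\ast\tilde\psi$ with $\psi\ge 0$ supported in a sufficiently short subinterval of $I$ (harmless, since such $\varphi$ still lies in $W^{-\alpha/2}_0(I)$). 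The paper's shorter route is preferable in context mainly because the Carleson/sampling machinery is set up once in Section 3 and reused for Lemma \ref{onto lemma} and Theorem \ref{lin-type theorem}.
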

\begin{proof}
	The equivalence $(a')\Leftrightarrow (b')$ is obtained exactly as in \cite{olsen_saksman2010}. One simply     computes that the adjoint operator of $E_I$ 
	with respect to the natural non-weighted pairings is given by
	\begin{equation*}
		E_I^\ast : g \in W^{-\alpha/2}_0(I) \longmapsto \sum_{n \in \N}  \frac{\hat{g}(\log n)}{\sqrt{n}} n^{-s} \in \Hp_{1/w}.
	\end{equation*}
	To establish $(a') \Leftrightarrow (a)$, we observe that using the measure
	\begin{equation*}
		\nu = \sum_{n \in \N} \delta_{\log n} \frac{w_n}{n},  
	\end{equation*}
	it is clear that $(a')$ is equivalent to the inequality
	\begin{equation*}
		\int \abs{\hat{f}(\xi)}^2   \dif \nu \lesssim \int \abs{\hat{f}(\xi)}^2 (1 + \xi^2)^{-\alpha/2}  \dif \xi. 
	\end{equation*}
	By Remark \ref{sampling corollary remark}, this is in turn equivalent to $\nu(\xi,\xi+1) \lesssim (1+ \xi^2)^{-\alpha/2}$. It is plain to see that this is equivalent to 
	\begin{equation*}
		\sum_{n \in (\e^{\xi-1},  \e^\xi)} w_n \lesssim {\e^\xi}{\xi^{-\alpha}}, \qquad \forall \xi \geq1.
	\end{equation*}
	By Lemma \ref{partial sum lemma}, this gives the desired conclusion.
\end{proof}

%
%

\section{Proof of Theorem \ref{lower theorem}} \label{proof of lower theorem}
Continuing in the spirit of the   previous lemma,  we take the same approach as in \cite{olsen_saksman2010}.  Namely, 
inspired by the embedding operator $E_I$, which we considered in the previous section, we define an operator 
\begin{equation*}
	 R_I : (a_n)_{n \in \Z^\ast} \longmapsto  \left( \sum_{n \in \N} \frac{a_n n^{-\im t} + a_{-n}n^{\im t}}{\sqrt{n} } \right) \Bigg|_I.
\end{equation*}
Essentially a direct sum of two embedding operators, it allows us the flexibility to study the real parts of functions in $\Hp_w$. To this end, we define  the space
\begin{equation*}
	\ell^2_w(\Z^\ast) = \{ (a_n)_{n \in \Z^\ast} : \sum_{n \in \N} (\abs{a_n}^2 + \abs{a_{-n}}^2)/w_n < \infty \}.
\end{equation*}
With this, we establish the following lemma.
\begin{lemma} \label{onto lemma} 
	Let $(w_n)$ be a sequence of non-negative numbers and $\alpha \in \R$, and suppose that the upper 
	Chebyshev-type inequality for $(w_n)$ holds with this $\alpha$. Then the following are equivalent:
	\begin{itemize}
		\item[(a')] For intervals $I$ sufficiently small, there exists constants such that
		\begin{equation*}
	 		A \norm{f}_{W^{-\alpha/2}_0(I)}^2 \leq \sum_{n \in \N} \frac{\abs{\hat{f}(\log n)}^2 + \abs{\hat{f}(-\log n)}^2}{n}w_n. 
		\end{equation*}
		\item[(b')] For intervals $I$ sufficiently small,   the operator $R_I : \ell^2_w(\Z^\ast) \rightarrow W^{\alpha/2}(I)$ is 
		bounded and onto.
		\item[(c')] For intervals $I$ sufficiently small, then for every $f \in D_{\alpha}(\C_{1/2})$ there exists
		an $F \in \Hp_w$ such that  the real part of $f-F$ vanishes on $1/2 + \im I$, and therefore $f-F$ has an analytic continuation
		across this segment.
	\end{itemize}
\end{lemma}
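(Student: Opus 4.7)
The plan is to prove (a')$\iff$(b') by adjoint duality of Hilbert-space operators, and (b')$\iff$(c') by decomposing boundary data into real and imaginary parts and invoking Schwarz reflection. At the outset, the upper Chebyshev-type hypothesis together with Lemma \ref{upper lemma} yields boundedness of $R_I : \ell^2_w(\Z^\ast) \to W^{\alpha/2}(I)$: one writes $R_I(a) = E_I F_+ + \overline{E_I G_-}$ where $F_+(s) := \sum_{n \geq 1} a_n n^{-s}$ and $G_-(s) := \sum_{n \geq 1} \overline{a_{-n}} n^{-s}$ both lie in $\Hp_w$.

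For (a')$\iff$(b'), recall that a bounded operator between Hilbert spaces is surjective precisely when its adjoint is bounded below. The bilinear pairings from Section~3 identify $(\ell^2_w(\Z^\ast))' = \ell^2_{1/w}(\Z^\ast)$ and $(W^{\alpha/2}(I))' = W^{-\alpha/2}_0(I)$, and a direct Fourier computation of $(R_I a, g)$ yields
$$R_I^\ast g \;=\; \Bigl(\tfrac{\hat g(\log n)}{\sqrt n},\;\tfrac{\hat g(-\log n)}{\sqrt n}\Bigr)_{n\in\N},$$
so $\|R_I^\ast g\|_{\ell^2_{1/w}(\Z^\ast)}^2$ is exactly the right-hand side of (a'). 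The lower bound in (a') is therefore precisely the bounded-below property of $R_I^\ast$, establishing the equivalence.

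For (b')$\Rightarrow$(c'), let $f \in D_\alpha(\C_{1/2})$; then $g := 2\Re f|_{1/2+\im I}$ is a real element of $W^{\alpha/2}(I)$. Pick $a\in\ell^2_w(\Z^\ast)$ with $R_I(a)=g$ and symmetrize by replacing $(a_n,a_{-n})$ with $b_n:=(a_n+\overline{a_{-n}})/2$ and $b_{-n}:=\overline{b_n}$; reality of $g$ forces $R_I(b)=\Re R_I(a)=g$. The Dirichlet series $F(s):=\sum b_n n^{-s}$ then lies in $\Hp_w$ and satisfies $2\Re F|_{1/2+\im I}=R_I(b)=2\Re f|_{1/2+\im I}$, whence Schwarz reflection supplies the analytic continuation of $f-F$. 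For (c')$\Rightarrow$(b'), decompose $g=g_r+ig_i\in W^{\alpha/2}(I)$ into real parts and use the Poisson extension together with the conjugate harmonic (Hilbert transform) to realize $g_r$ and $g_i$ as $2\Re f_r|_I$ and $2\Re f_i|_I$ for some $f_r,f_i\in D_\alpha(\C_{1/2})$. Applying (c') yields $F_r=\sum b_n n^{-s}$ and $F_i=\sum c_n n^{-s}$ in $\Hp_w$ with $2\Re F_r|_I=g_r$ and $2\Re F_i|_I=g_i$; the sequence $a_n:=b_n+ic_n$, $a_{-n}:=\overline{b_n}+i\overline{c_n}$ then belongs to $\ell^2_w(\Z^\ast)$, and a direct computation gives $R_I(a)=g_r+ig_i=g$.

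The main technical obstacle is reconciling the distributional framework of $W^{\alpha/2}(I)$ with the harmonic-extension step in (c')$\Rightarrow$(b'), especially for $\alpha<0$: one needs that every real element of $W^{\alpha/2}(I)$ is the restriction to $I$ of the real part of some $f\in D_\alpha(\C_{1/2})$, with norm control. This is standard Poisson-Hilbert theory on the half-plane, but it is the step where the argument relies on more than pure functional analysis and Schwarz reflection.
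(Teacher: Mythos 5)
Your proposal is correct and follows essentially the same route as the paper: the equivalence $(a')\Leftrightarrow(b')$ via the adjoint computation $R_I^\ast g = (\hat g(\pm\log n)/\sqrt n)$ and the bounded-below criterion for surjectivity, $(b')\Rightarrow(c')$ by symmetrizing a preimage of $2\Re f$ and applying Schwarz reflection, and $(c')\Rightarrow(b')$ by splitting $h=u+\im v$ into real and imaginary parts and recombining coefficients, which is exactly the paper's displayed computation. The realizability of real elements of $W^{\alpha/2}(I)$ as boundary real parts of $D_\alpha(\C_{1/2})$ functions, which you rightly flag, is left implicit in the paper as well (it defers these details to the cited earlier work).
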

\begin{proof}
	In light of Lemma \ref{upper lemma}, the equivalence of $(a') \Leftrightarrow (b')$, as well as the implication $(b') \Rightarrow (c')$ is proved more or less exactly as in \cite{olsen_saksman2010}.  The implication $(c') \Rightarrow (b')$ follows by the same line of reasoning. Indeed,
	Let $h(t) \in W^\alpha(I)$ be given, and write $h = u + \im v$. By hypothesis, there exists $F,G \in \Hp_w$ such that $\Re F = u$ and $\Re G = v$ on $1/2  + \im I$. If we write
	\begin{equation*}
		 F = \sum (\alpha_n + \im \beta_n) n^{-s} \qquad \text{and} \qquad G = \sum (\gamma_n + \im \delta_n) n^{-s},
	\end{equation*}
	it follows by considering real parts and imaginary parts, that
	\begin{equation*}
		R_I(c_n) \longmapsto \sum_{n \in \N}  \frac{c_n n^{-\im t} + c_{-n} n^{\im t}}{\sqrt{n}} = h,
	\end{equation*}
	where
	\begin{align*}
		c_n &= \frac{\alpha_n + \gamma_n}{2} + \im \frac{\delta_n + \beta_n}{2},  \\
		c_n &= \frac{\alpha_n - \gamma_n}{2} + \im \frac{\delta_n- \beta_n}{2}.
	\end{align*}
\end{proof}
We are now ready to prove the theorem.
\begin{proof}[Proof of Theorem \ref{lower theorem}] 
	By the previous lemma, it suffices to show $(a) \Leftrightarrow (a')$, $(c') \Rightarrow (b)$ and $(b) \Rightarrow (b')$. 
	
		$(c') \Rightarrow (b)$: In order to get the analytic continuation, the idea is to express the difference $f-F$ by using a Szeg\H o-type integral. The norm estimates then follow in a straight-forward manner. For the Dirichlet-Hardy space, this is proven in detail as a part of \cite[Theorem 1]{olsen_saksman2010}.
		For more general weights satisfying the Chebyshev-type inequalities for $\alpha=0$, this proof can be repeated word by word.
		For $\alpha\neq 0$, the necessary adjustments to the argument are outlined in the proof of  \cite[Theorem 5]{olsen_saksman2010}.

	 $(a) \Leftrightarrow (a')$: We   define the measure
	\begin{equation*}
		\nu = \sum_{n \in \N} \Big( \delta_{\log n}+ \delta_{-\log n} \Big) \frac{w_n}{n}.
	\end{equation*}
		In Lemma \ref{upper lemma}, we established that the upper Chebyshev inequality is equivalent
	to the Carleson-type inequality, so we may assume that it holds. 
	As a consequence,   $(a')$ holds if and only if $\nu$ is a sampling measure for $\mathcal{F}W^{-\alpha/2}_0(I)$.

	Suppose that $\nu$ is a sampling measure for $\mathcal{F}W^{-\alpha/2}_0(I)$. By Corollary \ref{sampling corollary},
	it follows that for some $r>0$ and $\delta >0$, then $\Lambda_{r,\delta}$ has positive density. In particular, for $m >0$ and sufficiently large $\xi$, we have
	\begin{equation*}
		\abs{\Lambda \cap (\xi- rm, \xi) } \geq C > 0. 
	\end{equation*}
	So, for sufficiently large $\xi$ there exists $k \in \N$ with $kr \in (\xi-(m+1)r,\xi -r)$ for which $\nu(kr, (k+1)r) \geq \delta (kr)^{-\alpha/2}$. This yields
	\begin{align*}
		\sum_{  n \in (1, \e^\xi)} w_n 
		 \gtrsim \e^{kr}   \sum_{  n \in (\e^{kr}, \e^{(k+1)r})} \frac{w_n}{n} 
		 \geq \e^{kr} (kr)^{-\alpha/2} 
		\simeq \e^\xi \xi^{-\alpha/2}.
	\end{align*}

		We turn to the converse. By Lemma \ref{partial sum lemma}, if the lower Chebyshev inequality holds, then there exists $\eta \in (0,1)$ 
	so that for large enough $x$ we have
	\begin{equation*}
 		 \sum_{n \in (\eta x, x)} w_n \gtrsim   x (\log x)^{-\alpha}.
	\end{equation*}
	By setting $\eta = \e^{-q}$ and $x = \e^\xi$, this implies that there exists $C>0$ such that for $\xi$ large enough we have
$\nu(\xi-q,\xi) \geq  C \xi^{-\alpha}$.
	In other words, the set
	\begin{equation*}
		\Lambda = \{ k  :   \nu( kq, (k+1)q ) \geq C (1 + (kr)^2 )^{-\alpha/2}  \}
	\end{equation*}
	has density 
$D^{-}(\Lambda)  =   1/q$.
	By basic considerations, it also follows that for $r < q$, the same holds when the constant $C$ is suitably reduced. Hence, by Corollary 	\ref{sampling corollary}, it follows that $\nu$ is a sampling measure for $\mathcal{F}W^{-\alpha/2}_0(J)$, whenever $\abs{J} \leq 2\pi/q$.

  $(b) \Rightarrow (b')$: We use the following basic lemma from operator theory. We refer the reader to    e.g. \cite[Lemma 4, p. 182]{young1980} for a proof.  
\begin{lemma} \label{approximate surjectivity}
	Suppose that $X,Y$ are Banach spaces, and that $Z: X \rightarrow Y$ is a closed linear operator.
	Let $B_X$ and $B_Y$ denote the unit balls of $X$ and $Y$, respectively.
	If there exists a number $M>0$ such that for every $y \in B_Y$ there exists $x \in M B_X$ for which $\norm{Zx - y} \leq 1/2$, then $Z$ is surjective.
\end{lemma}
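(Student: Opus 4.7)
The plan is the standard geometric-series iteration that underlies the open mapping theorem. Given $y \in B_Y$, first apply the hypothesis to produce $x_1 \in M B_X$ with $\|y - Z x_1\| \le 1/2$. The residual $2(y - Z x_1)$ again lies in $B_Y$, so a second application of the hypothesis yields $\tilde x_2 \in M B_X$ with $\|2(y - Z x_1) - Z \tilde x_2\| \le 1/2$; setting $x_2 = \tilde x_2/2$ we have $x_2 \in (M/2) B_X$ and $\|y - Z(x_1 + x_2)\| \le 1/4$. Iterating the same rescaling trick produces a sequence $(x_k)$ with $\|x_k\| \le M/2^{k-1}$ and $\|y - Z(x_1 + \cdots + x_k)\| \le 2^{-k}$.

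Because $\sum_k \|x_k\| \le 2M$ and $X$ is complete, the partial sums $S_k := x_1 + \cdots + x_k$ converge to some $x \in X$ with $\|x\| \le 2M$, while simultaneously $Z S_k \to y$ in $Y$. The last step is where one genuinely needs the assumption that $Z$ is closed rather than merely bounded: closedness of the graph of $Z$ forces $x \in \mathrm{dom}(Z)$ and $Z x = y$, so $y$ lies in the range of $Z$ and surjectivity follows. The only mild subtlety is ensuring each $x_k$ belongs to $\mathrm{dom}(Z)$, which is implicit in the hypothesis (``there exists $x \in M B_X$ \emph{for which} $\|Z x - y\| \le 1/2$'' presupposes that $Z x$ makes sense); no further nontrivial step is required.
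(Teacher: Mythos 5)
Your proof is correct and is exactly the standard geometric-series iteration; the paper gives no proof of its own but refers to Young's book, where this same argument (rescale the residual into the unit ball, apply the hypothesis, sum the resulting absolutely convergent series, and use closedness of $Z$ to pass to the limit) is carried out. Nothing is missing.
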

Suppose that the interval $J$ is such that $R_J$ satisfies part $(b)$ of Theorem \ref{lower theorem}. We show that $(b')$ holds for sufficiently small co-centric intervals $I \subset J$.
To this end, suppose that $h$ in the unit ball of ${W^{\alpha/2}(I)}$ is given, and let $f,g \in D_{\alpha}(\C_{1/2})$ be   functions such that $f + \bar{g}$ has   $h$ as its   boundary distribution on $1/2 + \im I$. Since $h$ is the restriction of a compactly supported distribution, it is not hard to use Laplace transforms to show that $f,g$ can be chosen so that both $\norm{f}_{D_\alpha}$ and $\norm{g}_{D_\alpha}$ are smaller than or equal to  some  constant $B$, independent of $h$. We then 
apply part $(b)$ of Theorem 2 to the disc $\Gamma$  which has the segment $1/2 + \im I$ as a diameter. Since $\Gamma$ has a positive distance to $\C \backslash \C_J$, 
	there exists a constant $C>0$, only depending on $\Gamma$ and $I$,  and $F, G \in \Hp_w$ 
	such that both $\phi := f-F$ and $\psi := g - G$ extends analytically across $1/2+\im I$ and satisfies
	\begin{equation} \label{estimate}
		\sup_{s \in \Gamma} \abs{\phi(s) + \overline{\psi(s)}} \leq C \norm{h}_{W^{\alpha/2}} = C.
	\end{equation}
With this, and a slight abuse of notation, we get
\begin{equation*}
	R_I (F + \overline{G} ) 
	= h  +  (\phi + \overline{\psi})|_I.
\end{equation*}
By a straightforward computation using \eqref{estimate}, we get $\norm{\phi + \overline{\psi}}_{W^{\alpha/2}(I)} \leq C \abs{I}$.
Choosing $I$ so small that $\abs{I} \leq 1/2C$, we   invoke Lemma \ref{approximate surjectivity} to get the desired conclusion.
\end{proof}
We now give two remarks which shows that Theorem \ref{lower theorem} cannot be improved.
\begin{remark} \label{lower theorem first remark} 
	There exist sequences $w_n$ which satisfy both an upper and lower Chebyshev inequality for the same $\alpha$, but for which 
	$\Hp_w$ only reproduces boundary functions on small intervals. Indeed, for each $k \in \N$, choose $n_k \in \N$ such that $\log n \in (k-1/5, k+1/5)$.
	For $n \in \N$, set $w_n = n$ if $n = n_k$ for some $k$, and $w_n = 0$ otherwise. We extend this to negative indices by the rule $w_{-n} = w_n$. On the one hand, it now follows by Kadec's $1/4$ theorem (see e.g. \cite[p. 36, Theorem 14]{young1980})
	that $\nu = \sum \delta_{\log n} w_n/n$ is a sampling measure for $\PW(-\pi,\pi)$. On the other hand, by the Beurling-Jaffard-Seip density theorem above, this sequence cannot be sampling for  $\PW(-\pi - \epsilon, \pi + \epsilon)$ for any $\epsilon>0$.
\end{remark}
\begin{remark} \label{lower theorem second remark}
	As an example of a sequence $w_n$ for which $\Hp_w$ reproduces boundary values, but for which the embedding does not hold. One can choose $w_n$ as  in the   remark above, and set $w'_n = \e^n$ for $n \neq n_k$ for any $k$. Then the space $\Hp_{w''}$, where $w''_n = w_n + w'_n$ reproduces boundary values of $\PW(-\pi,\pi)$ since $\Hp_w \subset \Hp_{w''}$. However, no embedding of the type that we have considered holds.
\end{remark}

%
%

\section{Proof of Theorem \ref{interpolation theorem}} \label{interpolation section}
\begin{proof}[Proof Theorem \ref{interpolation theorem}]
$(a)$: Suppose that  $\mu$ is a Carleson measure for $D_\alpha(\C_{1/2})$ with compact support. Since the space $\Hp_w$ is
embedded into $D_\alpha(\C_{1/2})$ for some $\alpha \in \R$,   it holds that for $N\in \N$ large enough then   $F \in \Hp_w$ implies $F(s)/s^N \in D_\alpha(\C_{1/2})$. Hence,  
\begin{equation*}
	 \int_{\C_{1/2}} \abs{F(s)}^2 \dif \mu(s) \lesssim \int_{\C_{1/2}} \Abs{F(s)/s^N}^2 \dif \mu(s)
	 \lesssim \norm{F/s}_{D_\alpha}^2 \lesssim \norm{F}_{\Hp_w}^2.
\end{equation*}

As for the converse, we use the fact that by Theorem \ref{lower theorem}, for every $f \in D_\alpha(\C_{1/2})$ and interval $I$, there
exists and $F \in \Hp_w$ such that $F -f$ has an analytic extension across the segment $1/2 + \im I$. Hence, by choosing the interval $I$
large enough, we get
\begin{equation*}
	 \int_{\C_{1/2}} \abs{f}^2 \dif \mu \leq 	 \int_{\C_{1/2}} \abs{F}^2 \dif \mu + \int_{\C_{1/2}}\abs{f - F}^2 \dif \mu 
	 \lesssim \norm{F}_{\Hp_w}^2 + C.
\end{equation*}
The conclusion now follows either by the norm control offered by Theorem \ref{lower theorem}, or the closed graph theorem.

$(b)$:  
The following argument is different from the one found in \cite{olsen_seip2008}, which was applied in the case of the weights $w_n = (\log n)^\alpha$, as it avoids use of the reproducing kernels beyond their role as norms for point evaluations.  
Instead, it relies on Lemma \ref{approximate surjectivity} and part $(b)$ of Theorem \ref{lower theorem}.
	One direction is simple, and follows by the same arguments as in \cite{olsen_seip2008}. Indeed, by Theorem \ref{upper theorem}, 
	the space $\Hp_w$ is embedded locally into the space $D_\alpha(\C_{1/2})$.   As above, it follows
	that if $F \in \Hp_w$, then for some $N \in \N$ large enough, we have $F/s^N \in D_\alpha(\C_{1/2})$. Since we are dealing with bounded interpolating sequences, the problem $F(s_n ) =  w_n$ has a solution if and only if $F(s_n) = w_n s_n^N$ does. Hence, $f(s) = F(s)/s^N$ 
	is a function in $D_\alpha(\C_{1/2})$ that solves the problem $f(s_n) = w_n$.

	We turn to the converse. Assume that $S = (s_n)_{n \in \N}$ is a bounded interpolating sequence for $D_\alpha(\C_{1/2})$, and write
	$k_n^{D_\alpha}$ and $k_n^{\Hp_w}$ for the reproducing kernels at $s_n$ of the respective spaces.
	This means that the interpolation operator defined by
	\begin{equation*}
		 f \in D_\alpha(\C_{1/2}) \longmapsto \left( \frac{f(s_n)}{\norm{k_n^{D_\alpha}}_{D_\alpha}} \right)_{n \in \N} \in \ell^{2}
	\end{equation*}
	is bounded and onto $\ell^{2}$. 
	Since $\norm{k_n^{D_\alpha}}_{D_\alpha} \simeq \norm{k_n^{\Hp_w}}_{\Hp_w}$, as follows from Lemma \ref{reproducing lemma},  the same remains true
	if we replace the weights of the operator by $1/\norm{k_n^{\Hp_w}}_{\Hp_w}$. 
	Next, without loss of generality, we may assume that the sequence $\seq{s_n}_{n \in \N}$ satisfies $\sigma_{n+1} \geq \sigma_n$. With this in mind we set $S_N = \seq{s_n}_{n\geq N}$ and define the   operators
	\begin{equation*}
		\mathcal{T}_N : F \in \Hp_w \longmapsto \left( \frac{F(s_n)}{\norm{k_n^{\Hp_w}}_{\Hp_w}} \right)_{n \geq N}.
	\end{equation*}
	
 		By the same reasoning as in the proof of  \cite[Thm. 2.1]{olsen_seip2008}, it follows that if $\mathcal{T}_N$ is surjective for some $N \in \N$, then the operator $\mathcal{T}_1$ is also surjective. The argument uses Lagrange-type sums   of  finite products of the type
	\begin{equation*}
		\prod_{j=1}^N \left( 1 - p_j^{-(s-s_j)} \right),
	\end{equation*}
	where the $p_j$ are prime numbers.

	 Next, we   show that $\mathcal{T}_N$ is onto for large enough $N$. So, suppose that $b_j$ is in the unit ball of $\ell^2$, and assume first that 
	there exists $f$ in the unit ball of $D_\alpha(\C_{1/2})$, such that $f(s_j) = b_j\norm{k_j^{\Hp_w}}_{\Hp_w}$ for $j \in \N$. (In general, it only follows by the open mapping theorem that such an $f$ exists in some dilation of the unit ball.)

	To apply part $(b)$ of Theorem  \ref{lower theorem}, let $\Gamma$ be an open disk in $\C_{1/2}$ that contains $S$, and let
	$I\subset \R$ be some bounded interval such that $\sup \set{ \abs{\Im s} : s \in I } \geq 2 \sup \set{ \abs{\Im s} : s \in \Gamma}$.
	Now, since $\Gamma$ is at a positive distance from $\C \backslash \C_I$,  
	there exist a constant $C>0$, only depending on $\Gamma$ and $I$,  and $F \in \Hp_w$ 
	such that $\phi := f-F$ extends analytically across $1/2+\im I$ and satisfies
	\begin{equation*}
		\sup_{s \in \Gamma} \abs{\phi(s)} \leq C \norm{f}_{D_\alpha(\C_{1/2})}.
	\end{equation*}
	It now follows, with a slight abuse of notation, that
	\begin{equation*}
		 \mathcal{T}_N F(n) = \mathcal{T}_N f(n) +  \mathcal{T}_N (F-f)(n)
		 = b_n + \phi(s_n)/\norm{k_n^{\Hp_w}}_{\Hp_w}.
	\end{equation*}
	So, to conclude by  Lemma \ref{approximate surjectivity},  we need to show that for $N$ large enough, we have 
	\begin{equation} \label{interpolation: size of KN}
		 \Norm{\left(\frac{\phi(s_n)}{\norm{k_n^{\Hp_w}}_{\Hp_w}}\right)_{n \geq N}}_{\ell^{2}} \leq 1/2.
	\end{equation}
	But this follows immediately as $\phi(s_n)$ is uniformly bounded, and the sequence $(1/\norm{k_n^{\Hp_w}}_{\Hp_w})_{n \in \N}$ is square summable. 
\end{proof}

%
%

\section{Proof of Theorem \ref{lin-type theorem}}  \label{proof of lin-type theorem}
Let $I$ be any bounded interval in $\R$. In light of part $(a')$ of Lemma \ref{onto lemma} it suffices to show that there exist constants such that  for all $f \in C^\infty_0(I)$ we have
\begin{equation*}
	 \norm{f}_{W^{-\alpha/2}_0(I)}^2 \lesssim \sum_{n \in \N} \frac{\abs{\hat{f}(\log n)}^2 + \abs{\hat{f}(-\log n)}^2}{n}w_n
	\lesssim  \norm{f}_{W^{-\alpha/2}_0(I)}^2.
\end{equation*}
By definition this is equivalent to the measure
\begin{equation*}
	\mu  =   \sum_{n \in \N} \frac{\delta_{\log n} + \delta_{-\log n}}{n} w_n
\end{equation*}
being a sampling measure for $\mathcal{F}W_0^{-\alpha/2}$. To apply Corollary \ref{lin-type corollary}, we first need to check that $\mu$ 
is $(-\alpha/2)$-continuous at infinity.

For $L>0$ and $\xi>0$ we get
\begin{equation*}
	\mu[(\xi-L, \xi)]  = \sum_{n \in (\e^{\xi-L}, \e^\xi)} \frac{w_n}{n} 
	\lesssim \e^{-\xi} \bigg(  \sum_{n \leq \e^{\xi}} w_n- \sum_{n \leq \e^{\xi-L}} w_n \bigg).
\end{equation*}
Given $\epsilon>0$, we choose $\xi$ large enough for \eqref{lin-type hypothesis} to yield
	$(C+\epsilon) \xi^{-\alpha} - (C-\epsilon) \e^{-L} (\xi-L)^{-\alpha}$.
Clearly, by choosing $L>0$ small, and letting $\xi$ be large enough, we obtain the desired inequality
\begin{equation*}
	\mu[(\xi-L,\xi)] \leq \epsilon ( 1 + \xi^2 )^{-\alpha/2}. 
\end{equation*}

To complete the proof, we need to check that there exists some $L>0$ such that
\begin{equation*}
	\inf_{\xi \in \R} \mu[(\xi-L, \xi)] (1 + \xi^2)^{\alpha/2}  >0.
\end{equation*}
But by what was done in the proof of $(a) \Leftrightarrow (a')$ in Theorem \ref{lower theorem},
there exists $L>0$ such that
 for large enough $\xi$ we have
\begin{equation*}
 	 \sum_{n \in ( \e^{\xi-L}, \e^{\xi})} w_n \gtrsim  \e^\xi  (\xi - L)^{-\alpha}.
\end{equation*}
By Lemma \ref{partial sum lemma}, the conclusion now follows.

\section{further remarks}
It is possible to define the spaces $\mathscr{A}_\beta$ of example \ref{bergman example} when $\beta >0$ for general $p \neq 2$ 
using the expression \eqref{bergman norm}. By \cite{cole_gamelin1986}, the resulting function spaces of Dirichlet series have bounded point evaluations on $\C_{1/2}$. That the same is true for the spaces $\mathscr{D}_\alpha$ of example \ref{D beta example}, for $\alpha <0$, is less obvious. However, as it is possible to solve the Hausdorff moment problem $(n+1)^{\alpha} = \int_0^1 r^{2n+1} \dif \nu_\alpha$, for some positive measure $\nu_\alpha$, one obtains a radial probability measure on $\D$ (see e.g. \cite[Chapter III]{widder1941}). This yields the required integral expression for the norm on polydisks.

By the previous remark, it is not hard to determine the multiplier algebras of these spaces. In the language of \cite{cole_gamelin1986}, it is clear that the multipliers of the spaces $A_\beta(\D^\infty)$ and $D_\alpha(\D^\infty)$ are exactly the elements of  the spaces   $H^\infty$   for the respective  infinite product   measures. But as these measures are products of radial probability measures supported on $\bar{\D}$, it was shown in  \cite[Theorem 11.1]{cole_gamelin1986} that these spaces are simply $H^\infty(\T^\infty)$.  As explained in \cite{hls1997} for the space $\Hp^2$, which we identified with the space  $H^2(\T^\infty)$ in the introduction, it now follows  that the multiplier algebra of both the spaces $\mathscr{A}_\beta$ and $\mathscr{D}_\alpha$ is
\begin{equation*}
	\Hp^\infty = \Big\{ \sum a_n n^{-s} : \sup_{\Re s > 0} \abs{\sum a_n n^{-s}  } < \infty \Big\}.
\end{equation*}
The same argument holds for any $p \geq 1$. Recently, similar results were   obtained  for $p \in (0,1)$ for function spaces on finite polydisks by Harutyunyan and Lusky \cite{harutyunyan_lusky2009}.


Our next remark concerns a consequence of an improvement of an inequality of Hardy and Littlewood. Mateljevic \cite{mateljevic1979} showed that  the constant  $C=1$ is best possible in  the inequality 
\begin{equation}  \label{one dimension}
	\sum_n \abs{a_n}^2  (n+1) \leq C \int_\T \abs{f(\e^{\im t})} \frac{\dif t}{\pi}.
\end{equation}
We remark that the proof of the latter fact was essentially known in the smooth case to Carleman, who considered only the finite Blaschke products, and was proved in full generality by Mateljevic  using the same method. Since it seems that his paper did not become widely known, the same proof was later rediscovered by Vukotic   \cite{vukotic2003}. In language of Dirichlet series, Helson \cite{helson2006studia} exploited this precise estimate  to prove, using a method due to Bayart, that
\begin{equation*}
	\norm{F}_{\mathscr{D}_{-1}}  \leq \norm{F}_{\Hp^1}.
\end{equation*}
Our observation is that by following the classical proof of the Riesz-Thorin interpolation theorem, it is possible to interpolate between \eqref{one dimension} and the Plancherel identity for $p=2$  to obtain (in  the notation of example \ref{D beta example})
\begin{equation} \label{interpolated guy}
	\norm{f}_{D_{1 - 2/p}(\D)} \leq \norm{f}_{H^p(\D)}.
\end{equation}
By generalising the argument of Bayart and Helson, this yields
\begin{equation*}
	\norm{F}_{\mathscr{D}_{1-2/p} } \leq \norm{F}_{\Hp^p}.
\end{equation*}
With respect  to Figure \ref{graph}, this family of inequalities takes place between the two points of intersection between the curves 
which represent the  "smoothness" of the spaces of $D_\alpha(\C_{1/2})$ and $\mathscr{D}_\alpha$
I.e., at the points $\alpha=-1$ and $\alpha=0$,   where the space $\mathscr{D}_\alpha$ behaves locally as one would expect. 

In addition to the local embeddings discussed above,   others are possible. For instance,   Seip observed that it follows from \eqref{interpolated guy} and a duality argument that   $\mathcal{D}_\alpha$ is locally embedded  into the space $H^{2^{\alpha+1}}(\C_{1/2})$. Specifically, given a bounded interval $I$, then there exists a constant $C>0$ such that for $f \in \mathscr{D}_\alpha$ we have
\begin{equation*}
	 \sup_{\sigma > 1/2} \int_I \abs{f(\sigma + \im t)}^{2^{\alpha+1}} \leq C \norm{f}_{\mathscr{D}_\alpha}^{2^{\alpha+1}}.
\end{equation*}
We point out the the best possible constant of \eqref{interpolated guy} is not needed for this argument.


Finally, we mention that the Helson-Bayart inequality mentioned above  is used in  \cite{helson2006studia} to prove a special case of the Nehari lifting theorem for Hankel forms on the Hardy space $H^2(\T^\infty)$. A Hankel form in countably infinitely many variables is defined by
\begin{equation*}
	(a_j, b_j) := \sum_{j,k \in \N} a_j b_{k} \rho_{jk},
\end{equation*}
where $j$ and $k$ are multiplied in the index of $\rho_{jk}$. (Note that the one variable definition is retrieved by only  summing over indices $j= 2^m$.) The result of Helson says that if the Hankel form is a Hilbert-Schmidt operator, then there exists a function $\phi$ in $L^\infty(\T^\infty)$ such that $\hat{\phi}(n) = \rho_n$ for $n \in \N$. The connection is that the Hilbert-Schmidt condition is exactly
\begin{equation*}
	\sum_{j,k \in  \N} \abs{\rho_{jk}}^2 =  \sum_{n\in \N} d(n) \abs{\rho_n}^2 < \infty,
\end{equation*}
where $d(n)$ is the number of divisors function (see Example \ref{D beta example}).
By  the Helson-Bayart inequality, the solution now follows by a duality argument. In the general case, the problem has been settled by   Ferguson and Lacey on   the bidisk  \cite{ferguson_lacey2002} and Lacey and Terwilleger on polydisks of finite dimension \cite{lacey_terwilleger2009}, but it remains open on the infinite dimensional polydisk. (See also \cite[p. 54]{helson2005book} for a discussion of this problem.)

\section*{Acknowledgements}
Parts of this paper is based on research done during the work on the  PhD thesis of the author, and he would therefore like to thank his supervisor   professor Kristian Seip for   advice and access to the unpublished note \cite{seip2009unpublished}.
The author would also like to thank professor Eero Saksman for valuable conversations regarding  the proof of Theorem \ref{interpolation theorem}, and Anders Olofsson for suggesting example 2.

\bibliographystyle{amsplain}
\def\cprime{$'$} \def\cprime{$'$} \def\cprime{$'$}
\providecommand{\bysame}{\leavevmode\hbox to3em{\hrulefill}\thinspace}
\providecommand{\MR}{\relax\ifhmode\unskip\space\fi MR }
\providecommand{\MRhref}[2]{%
  \href{http://www.ams.org/mathscinet-getitem?mr=#1}{#2}
}
\providecommand{\href}[2]{#2}

\end{document}